\documentclass[a4paper,11pt]{amsart} 
\oddsidemargin  0.4 cm  
\evensidemargin 0.4 cm   
\textwidth     15.16 cm 
\headsep        0.8 cm   

\usepackage{amssymb, amsmath}
\usepackage{amscd}
\numberwithin{equation}{section}
\usepackage{epsfig}
\usepackage{amsmath}
\usepackage{amsfonts,amssymb,amsopn}

\usepackage{amsthm}
\usepackage{hyperref}
\usepackage{verbatim}
\usepackage{color}
\usepackage[color,all]{xy}


\newcommand{\cO}{{\mathcal O}}
\newcommand{\C}{{\mathbb C}}
\newcommand{\PP}{{\mathbb P}}
\newcommand{\bV}{\overline{V}}

\newcommand{\tC}{\widetilde{C}}
\newcommand{\tF}{\widetilde{F}}
\newcommand{\tW}{\widetilde{W}}
\newcommand{\tx}{\widetilde{x}}




\newcommand{\Ker}{\mathrm{Ker}}
\newcommand{\Image}{\mathrm{Im}\,}

\newcommand{\Hom}{\mathrm{Hom}}
\newcommand{\rank}{\mathrm{rk}\,}
\newcommand{\ev}{\mathrm{ev}}


\newcommand{\Kc}{K_{C}}
\newcommand{\Oc}{{{\cO}_{C}}}
\newcommand{\Oz}{{{\cO}_{Z}}}


\newcommand{\Quot}{\mathrm{Quot}}

\newcommand{\codim}{\mathrm{codim}}
\newcommand{\Cliff}{\mathrm{Cliff}}
\newcommand{\reddeg}{\mathrm{red \, deg \,}}


\newcommand{\Gr}{\mathrm{Gr}}





\newcommand{\mev}{M_{V, E}}
\newcommand{\mlv}{M_{V, L}}
\newcommand{\mfw}{M_{W, F}}
\newcommand{\mfv}{M_{V, H^{2n+1}}}
\newcommand{\opo}{\cO_{\PP^1}}
\newcommand{\opeo}{\cO_{\PP E} (1)}

\newtheorem{theorem}{{\textbf Theorem}}[section]
\newtheorem{proposition}[theorem]{{\textbf Proposition}}
\newtheorem{corollary}[theorem]{{\textbf Corollary}}
\newtheorem{lemma}[theorem]{{\textbf Lemma}}
\newtheorem{exit}[theorem]{{\textbf Example}}
\newtheorem{conjecture}[theorem]{{\textbf Conjecture}}
\newtheorem{defn}[theorem]{{\textbf Definition}}
\newtheorem{remit}[theorem]{{\textbf Remark}}
\newtheorem{quit}{{\textbf Question}}
\newenvironment{question}{\begin{quit}\rm}{\end{quit}}
\newenvironment{example}{\begin{exit}\rm}{\end{exit}}
\newenvironment{remark}{\begin{remit}\rm}{\end{remit}}
\newenvironment{definition}{\begin{defn}\rm}{\end{defn}}

\linespread{1.1}

\title{Linear stability of coherent systems and applications to Butler's conjecture}

\author{Abel Castorena}

\address{A.\ C.\ and E.\ D.\ L.\ N.: Centro de Ciencias Matem\'aticas -- UNAM Campus Morelia, Antigua Carretera a P\'atzcuaro \# 8701, Col.\ Ex Hacienda San Jos\'e de la Huerta, Morelia, Michoac\'an, Mexico C.\ P.\ 58089.}

\author{George H.\ Hitching}

\address{G.\ H.\ H.: Oslo Metropolitan University, Postboks 4, St. Olavs plass, 0130 Oslo, Norway.}

\author{Erick Luna}

\email{abel@matmor.unam.mx; gehahi@oslomet.no; eluna@matmor.unam.mx}

%
%

\subjclass[2010]{14H45; 14H51; 14H60}

\keywords{Coherent system, curve, linear stability, dual span bundle}

\begin{document}

\begin{abstract}
The notion of linear stability of a variety in projective space was introduced by Mumford in the context of GIT. It has subsequently been applied by Mistretta and others to Butler's conjecture on stability of the dual span bundle (DSB) $\mev$ of a general generated coherent system $( E, V )$. We survey recent progress in this direction on rank one coherent systems, prove a new result for hyperelliptic curves, and state some open questions. We then extend the definition of linear stability to generated coherent systems of higher rank. We show that various coherent systems with unstable DSB studied in \cite{bmno} are also linearly unstable. We show that linearly stable coherent systems of type $(2, d, 4)$ for low enough $d$ have stable DSB, and use this to prove a particular case of a generalized Butler conjecture. We then exhibit a linearly stable generated coherent system with unstable DSB, confirming that linear stability of $( E, V )$ in general remains weaker than semistability of $\mev$ in higher rank. We end with a list of open questions on the higher rank case.
\end{abstract}

\maketitle

\section{Introduction}

\noindent In the context of geometric invariant theory, David Mumford introduced in \cite{mumfordstability} the notion of linear stability for projective varieties $X \subset \PP^{n-1}$. This definition in some way measures how $X$ sits in $\PP^{n-1}$. Intuitively, if $X$ is linearly stable then a low-dimensional subspace of $\PP^{n-1}$ should not contain too many points of $X$. The precise formulation \cite[Definition 2.16]{mumfordstability} is:

\begin{definition} \label{MumfordDefn}
Let $X \subset \PP^{n-1} = \PP V^\vee$ be a variety of dimension $r$. The \textsl{reduced degree} of $X$ is defined as
\[ \reddeg ( X ) \ := \ \frac{\deg (X)}{n - \dim ( X )} . \]
Then $X$ is said to be \textsl{linearly semistable} if for all subspaces $W \subset V$ such that the image of the projection $p_W \colon X \dashrightarrow \PP W^\vee$ has dimension $r$, we have
\[ \reddeg \left( p_W ( X ) \right) \ \ge \ \reddeg ( X ) . \]
If inequality is strict for all such $W$, then $X$ is said to be \textsl{linearly stable}.
\end{definition}

\noindent Mumford \cite[Theorem 4.12]{mumfordstability} went on to show that when $X$ is a curve, linear stability implies Chow stability (that is, stability of the Chow form of $X$ in the GIT sense). This was used by Stoppino \cite{Sto} and Barja--Stoppino \cite{BS} to derive slope inequalities for fibred surfaces. Brambila-Paz and Torres-L\'opez also applied it in \cite{bptl} to prove Chow stability of many curves in projective space, and to study Hilbert schemes of curves in $\PP^{n-1}$.

 \vspace{.25cm}
 
\noindent The main focus of the present work is another application of linear stability studied by Mistretta, Stoppino and others. Recall that a \textsl{generated coherent system} over a smooth curve $C$ is a pair $( E, V )$ where $E \to C$ is a vector bundle and $V \subseteq H^0 ( C, E )$ a subspace such that the evaluation map $V \to E|_p$ is surjective for all $p \in C$. To such an $(E, V)$ we can associated a natural exact sequence
\begin{equation}
0 \ \to \ \mev \ \to \ V \otimes \Oc \ \to \ E \ \to \ 0 \label{DefnDSB}
\end{equation}
where the bundle $\mev$ has variously been called the \textsl{dual span bundle} (DSB), \textsl{syzygy bundle}, \textsl{kernel bundle} and \textsl{evaluation bundle} of the pair $(E, V)$. We write $\mev = M_E$ when $V = H^0(C,E)$. For $\mev$ as above, $\left( \mev^\vee, V^\vee \right)$ is also a generated coherent system. There is a notion of stability for coherent systems, depending on a positive real parameter $\alpha$, which is recalled in Definition \ref{alphaSt} below. The well known Butler conjecture \cite[Conjecture 2]{but} can be formulated as follows:

\begin{conjecture} \label{ButlerConjecture}
Suppose that $C$ is a general curve of genus $g \ge 3$ and $(E, V)$ a generated coherent system which is $\alpha$-stable for $\alpha$ close to $0$ and general among such coherent systems. Then $( \mev^\vee , V^\vee )$ is also $\alpha$-stable.
\end{conjecture}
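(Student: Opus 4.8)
The plan is to split the problem into a \emph{structural} implication—linear stability of $(E, V)$ forces $\alpha$-stability of the dual span coherent system—and a \emph{genericity} statement guaranteeing that a general member of the family is linearly stable. For the structural step I would dualize \eqref{DefnDSB} to obtain
\[ 0 \ \to \ E^\vee \ \to \ V^\vee \otimes \Oc \ \to \ \mev^\vee \ \to \ 0, \]
which exhibits $(\mev^\vee, V^\vee)$ as a generated coherent system whose own dual span bundle is $E^\vee$. Since $\alpha$ is taken close to $0$, $\alpha$-stability of $(\mev^\vee, V^\vee)$ is governed by its saturated coherent subsystems. Each such subsystem is determined by a subsheaf of $\mev^\vee$ together with the sections of $V^\vee$ it carries, and—through the dual sequence—by a subsheaf of $E$ generated by a subspace $W \subseteq V$, that is, by precisely the data entering the linear projections $p_W$ of Definition \ref{MumfordDefn}. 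The first key step is then to translate the $\alpha$-slope inequality for a hypothetical destabilizing subsystem into a reduced-degree inequality of the shape $\reddeg(p_W(X)) < \reddeg(X)$, so that linear stability of $(E, V)$ directly excludes it.

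The second step is to show that a general $\alpha$-stable (for $\alpha$ near $0$) generated coherent system on a general curve $C$ is linearly stable. Here I would argue that the linearly unstable locus is closed: it is cut out by finitely many determinantal conditions, one for each admissible rank and degree of a generating subsheaf, so it defines a closed subvariety of the relevant parameter or moduli space. To see that this subvariety is \emph{proper} I would specialize, degenerating $C$ to a chain of elliptic or rational components and using limit linear series together with the additivity of Brill--Noether numbers to produce, in the limit, at least one coherent system that is verifiably linearly stable by a direct numerical check on each component. Semicontinuity of the reduced-degree inequalities would then propagate linear stability back to the general curve and the general system, and combining this with the structural step would yield $\alpha$-stability of $(\mev^\vee, V^\vee)$ for $\alpha$ close to $0$, as required.

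The hard part will be the structural step in higher rank. As the counterexample announced in the abstract shows, linear stability of $(E, V)$ does \emph{not} in general imply even semistability of $\mev$ once $\rank E \ge 2$, so the clean rank-one dictionary breaks down: a destabilizing subsystem of $(\mev^\vee, V^\vee)$ need not descend from an honest linear projection $p_W$, and the reduced-degree inequality by itself becomes too weak to exclude it. Bridging this gap seems to require either (i) confining the argument to the generic locus and showing that the pathological subsystems responsible for the counterexample are themselves non-generic, hence absent for a general $(E, V)$, or (ii) strengthening the higher-rank notion of linear stability until it matches $\alpha$-stability of the dual span bundle exactly.

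Controlling the interaction between the two steps is therefore the central difficulty: one must arrange the degeneration of step two so that it avoids precisely the locus where linear stability and dual span bundle stability diverge. Guaranteeing this avoidance—rather than merely producing \emph{some} linearly stable limit—is exactly what the rank-one theory handles automatically but the higher-rank theory does not, and it is the reason the conjecture remains open in full generality.
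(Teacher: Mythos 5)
The statement you are addressing is a \emph{conjecture}: the paper does not prove it, and it remains open in general. The paper only establishes special cases (for instance, Corollary \ref{biellipticButler} for type $(2,14,4)$ systems on bielliptic curves, via Proposition \ref{Sufficient2d4}), and in fact devotes much of its energy to showing why the strategy you outline cannot work as stated. Your proposal is therefore not a proof, and the gaps are not merely technical.

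The fatal problem is your ``structural step.'' You claim that every destabilizing coherent subsystem of $(\mev^\vee, V^\vee)$ is controlled by a subspace $W \subseteq V$ and hence excluded by linear stability of $(E,V)$. This is false. The Butler diagram (\ref{butlerdiagram}) attaches to a subbundle $S \subset \mev$ a subspace $W \subseteq V$ and a bundle $F_S$, but $F_S$ maps to $E$ by a morphism $\alpha$ that need not be injective, and $S$ need not be of the form $M_{W, E_W}$; consequently the slope of $S$ is not computed by the reduced degree of any projection $p_W$. The paper exhibits explicit counterexamples to your implication in both directions of generality: Theorem \ref{HyperellipticCounterexample} gives, on every hyperelliptic curve of genus $g \ge 7$, a linearly \emph{stable} rank-one system whose DSB is destabilized by $H^{\otimes -n}$ (a subsheaf invisible to linear projections), and Theorem \ref{counterex} does the same in rank two over any curve. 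Moreover these examples involve general choices of $V$, so your fallback option (i) --- that the pathological subsystems are non-generic --- also fails: on special curves the linearly stable locus genuinely meets the locus of unstable DSB, even generically. Your option (ii) is not a repair but a restatement of the problem. Your second step is likewise unexecuted: no limit linear series construction is given, and ``semicontinuity of the reduced-degree inequalities'' requires a boundedness argument for the family of generating subsheaves that you do not supply. What the paper actually does, where it succeeds, is supplement linear stability with an additional numerical hypothesis ($d < 2 d_3$ in Proposition \ref{Sufficient2d4}) under which the Butler diagram forces any destabilizing subbundle to come from a subspace $W$; absent such a hypothesis, the bridge you need does not exist.
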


\noindent As discussed in \cite{but} (see {\S} \ref{BackgroundCohSys} below), when $\alpha$ is close to zero the ambient vector bundle of an $\alpha$-stable coherent system is semistable. We shall be interested in the following slightly stronger conjecture.

\begin{conjecture} \label{StrongButlerConjecture}
Suppose that $C$ is any curve. Suppose that $(E, V)$ is a generated coherent system which is $\alpha$-stable for $\alpha$ close to $0$ and general among such coherent systems. Then $\mev$ is a stable vector bundle.
\end{conjecture}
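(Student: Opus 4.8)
The plan is to route the problem through linear stability, following the strategy used by Mistretta and Stoppino in the rank one case. Concretely, I would try to factor the conjecture into two implications: (i) a generated coherent system that is $\alpha$-stable for $\alpha$ near $0$ and general among such is linearly stable, in the higher-rank sense of linear stability that this paper develops, and (ii) linear stability of $(E,V)$ forces stability of $\mev$. Together these would give the result, and each is a statement that can in principle be attacked separately.

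For step (i), I would exploit the fact recalled in the introduction that for $\alpha$ close to $0$ the ambient bundle $E$ of an $\alpha$-stable coherent system is semistable, and then convert the $\alpha$-stability inequalities for sub-coherent-systems into the reduced-degree inequalities of Definition \ref{MumfordDefn}. The genericity hypothesis should be used to guarantee that the linear subspaces $W \subseteq V$ cut out by subbundles impose the expected number of conditions, so that $\dim p_W(X)$ and $\deg p_W(X)$ take their generic values and the comparison of reduced degrees points in the right direction.

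For step (ii), I would argue by contradiction from a saturated destabilizing subsheaf $N \subseteq \mev$ with $\mu(N) \ge \mu(\mev)$. Dualizing (\ref{DefnDSB}) and using that $\mev^\vee$ is generated by $V^\vee$, such an $N$ should produce a subspace $W \subseteq V$ together with a subvariety of $\PP V^\vee$ (or of the appropriate Grassmannian data in higher rank) whose reduced degree violates linear stability. This is the exact analogue of Mumford's dictionary between subbundles of the kernel bundle and linear projections of the embedded curve, now read in the higher-rank setting.

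The hard part — and the reason this is stated as a conjecture rather than proved — is that step (ii) is \emph{false} as it stands once $\rank E \ge 2$: as announced in the abstract, there exist linearly stable generated coherent systems whose DSB is not even semistable. Thus linear stability is genuinely weaker than stability of $\mev$ in higher rank, and the clean rank one implication breaks down. Any proof must therefore use the $\alpha$-stability and general-position hypotheses beyond what linear stability captures, presumably to show that the configurations responsible for the counterexamples do not occur for a general member. The fact that the paper can prove stability of the DSB for the restricted type $(2,d,4)$ with $d$ small suggests that the viable route is a type-by-type analysis that bounds the discrete invariants $(\rank N, \deg N)$ of a potential destabilizer — via Clifford-- or Brill--Noether-type estimates for subbundles of $\mev$ under genericity — and then rules each one out; a uniform such bound covering all types and all ranks is exactly what is missing.
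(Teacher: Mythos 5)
This statement is a conjecture that the paper does not prove in general; it only establishes a special case (type $(2,14,4)$ systems on bielliptic curves of genus $g \ge 6$, Corollary \ref{biellipticButler}) via exactly the two-step route you describe: first prove linear stability of $(E,V)$ (Proposition \ref{EllipticLinSt} combined with the pullback Lemma \ref{PullbackLinSt}), then invoke a numerical sufficient condition ($d < 2 d_3$, Proposition \ref{Sufficient2d4}) under which linear stability of a type $(2,d,4)$ system forces stability of $\mev$. Your diagnosis of the obstruction --- that linear stability is genuinely weaker than stability of $\mev$ once $\rank E \ge 2$, so the second implication requires extra hypotheses beyond genericity --- is precisely what the paper's Theorem \ref{counterex} confirms, so your proposal is an accurate account of the paper's partial strategy and of why no full proof is available, which is all that can be said of an open conjecture.
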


\noindent The reason for extending to arbitrary curves is that we shall later prove a case of Conjecture \ref{StrongButlerConjecture} for bielliptic curves. See also Remark \ref{rmkBielliptic}.

 \vspace{.25cm}
 
\noindent Butler's conjecture is of relevance for higher rank Brill--Noether theory \cite{bgpmn}, syzygies of curves \cite{farkaslarson}, and generalised theta divisors \cite{Popa}. The conjecture has been proven in many cases, especially when $E$ is a line bundle. The literature on the subject is considerable; we refer the reader to the landmark papers \cite{bbn2} and \cite{farkaslarson} for further information, and to \cite{bmno} for recent progress on the case $\rank ( E ) \ge 2$.

 \vspace{.25cm}
 
\noindent Let us explain the connection between Butler's conjecture and linear stability. Let $( L, V )$ be a base point free linear series over a smooth curve $C$ of genus $g \ge 0$; equivalently, a generated coherent system of rank one. Then as observed in \cite[Remark 3.2]{mistrettastoppino} (see also \cite[Definition 2.1]{BS}), linear stability of the image of the natural map $C \to \PP V^\vee$ is equivalent to the statement that $\mlv$ is not destabilized by any subbundle of the form $M_{V', L'}$ for a coherent subsystem $(L' , V')$ of $(L, V)$. (See Definition \ref{LinStRankOne} and Remark \ref{SlopeEquivRankOne} below for a more complete formulation.)

 \vspace{.25cm}
 
\noindent In particular, linear semistability of $( L, V )$ is a weaker condition than (slope) stability of the vector bundle $\mlv$; that is, slope (semi)stability of $\mlv$ implies linear (semi)stability of the pair $( L, V )$. As linear (semi)stability can be easier to check, with Butler's conjecture in mind it is of interest to determine conditions under which the reverse implication holds. In this direction, Mistretta \cite[Lemma 2.2]{mis} showed that linear (semi)stability of $( L, V )$ is in fact equivalent to (semi)stability of $\mlv$ when $\deg ( L ) \ge 2g+2c$ and $V \subseteq H^0 ( C, L )$ is a general subspace of codimension $c \leq g$. Using this result, he showed in \cite[Theorems 2.7 and 2.8]{mis} that for a general subspace $V \subseteq H^0 ( C, L )$ of codimension $c \leq g$, the bundle $\mlv$ is semistable.

 \vspace{.25cm}
 
\noindent Subsequently, Mistretta and Stoppino \cite[Theorem 1.1]{mistrettastoppino} gave conditions for the equivalence between the (semi)stability of $\mlv$ and linear (semi)stability of a linear series $( L, V )$ with $\deg ( L ) \leq 2 \cdot \dim ( V ) - 2 + \Cliff (C)$, and proved the semistability of $M_L$ when $L$ computes the Clifford index or $\deg ( L ) \geq 2g - \Cliff ( C )$. 
 However, Mistretta and Stoppino \cite[{\S} 8]{mistrettastoppino}, and subsequently the first author, E.\ Mistretta and H.\ Torres-L\'opez \cite{CMT}, exhibited various counterexamples showing that in general linear stability is weaker than slope stability of $\mlv$, even for complete coherent systems.

 \vspace{.25cm}
 
\noindent The goal of the present article is twofold. Firstly, after preliminaries in {\S} \ref{Preliminaries} on coherent systems and the \emph{Butler diagram} introduced in \cite{CT}, we give in {\S} \ref{RankOne} an overview of existing results relating linear stability and Butler's conjecture for coherent systems of rank one. In {\S} \ref{Hyperell}, we construct new examples showing in some sense that the situation studied in \cite{CMT} is not an isolated phenomenon. We note some natural questions complementing those in \cite[{\S} 5]{CMT}.

 \vspace{.25cm}
 
\noindent Our second object is to define linear semistability for coherent systems $(E, V)$ of higher rank. This is done in terms of a slope inequality for generated coherent subsystems of $(E, V)$, directly generalising Definition \ref{LinStRankOne}, and has not to our knowledge been formulated in this way before. As in the rank one case, (semi)stability of $\mev$ a priori implies linear (semi)stability of $( E, V )$, but not conversely. Thus it is natural to ask again under which conditions the converse implication does hold.

 \vspace{.25cm}
 
\noindent Now a thorough study of the semistability of dual span bundles associated to coherent systems of higher rank was initiated in \cite{bmno}. In particular, coherent systems $( E, V )$ are studied for which $\mev$ is not semistable. To get a feel for linear semistability in higher rank, and to place our investigation in context, we show in these cases the slightly stronger statement that $(E, V)$ is not linearly semistable.

 \vspace{.25cm}
 
\noindent Next, we give in Proposition \ref{Sufficient2d4} a sufficient condition under which linear stability of a type $(2, d, 4)$ coherent system $( E, V )$ implies slope stability of $\mev$. This uses the invariant $d_3$ from the gonality sequence of $C$ described in \cite[{\S} 4]{langenewstead}. We use this to prove a case of Conjecture \ref{StrongButlerConjecture} over a bielliptic curve.

 \vspace{.25cm}
 
\noindent In the other direction: In Theorem \ref{counterex} we construct, over any curve, a linearly stable coherent system $( E, V )$ with $\mev$ unstable. As in \cite[{\S} 8]{mistrettastoppino}, this is obtained as the pullback of a coherent system over $\PP^1$. This shows that semistability of $\mev$ is in general stronger than linear stability of $( E, V )$ also in higher rank.

 \vspace{.25cm}
 
\noindent We conclude by giving some open questions and directions of current and future research on linear stability for generated coherent systems of higher rank.

\subsection*{Acknowledgements} The first author is supported by project IN100723, ``Curvas, Sistemas lineales en superficies proyectivas y fibrados vectoriales'' from DGAPA, UNAM. He acknowledges Casa Matematica Oaxaca (CMO) and to the Organizing Committee for the support and invitation to participate as speaker at the BIRS--CMO workshop ``Moduli, Motives and Bundles -- New Trends in Algebraic Geometry'' which was held at the CMO in Oaxaca, Mexico from September 18 to September 23, 2022. The first and second authors also thank the research group ``Task Design in Mathematics Education'' of Oslo Metropolitan University for financial support. The second author thanks the Centro de Ciencias Matem\'aticas of UNAM Morelia for financial support, hospitality and excellent working conditions. The third author is supported by a doctoral fellowship from Conahcyt, Mexico.

\section{Preliminaries on coherent systems and Butler diagrams} \label{Preliminaries}

\noindent Let $C$ be a complex projective smooth curve of genus $g \ge 0$. Here we recall some definitions and fundamental facts about generated coherent systems and their dual span bundles.

\subsection{Generated coherent systems} \label{BackgroundCohSys}

\noindent For more detail about coherent systems, see for example \cite{bgpmn} and \cite{newsteadcohsys}.

 \vspace{.25cm}
 
\noindent A \textsl{coherent system of type $(r, d, n)$} over $C$ is a pair $(E, V)$ where $E \to C$ is a vector bundle of rank $r$ and degree $d$, and $V \subseteq H^0 ( C, E )$ is a subspace of dimension $n$. The coherent system is said to be \textsl{non-complete} if $V \subset H^0 ( C, E )$ is a proper subspace, and \textsl{complete} when $V = H^0 ( C, E )$. A \textsl{coherent subsystem} of $( E, V )$ is a coherent system $( F, W )$ where $F \subseteq E$ is a subbundle and $W \subseteq V \cap H^0 ( C, F )$. If $W = V \cap H^0 ( C, F )$ then $(F, W)$ is said to be a \textsl{complete} subsystem of $(E, V)$.

 \vspace{.25cm}
 
\noindent If $E$ is a coherent system and $W \subseteq H^0 ( C, E )$ a vector subspace, we set
\begin{equation} E_W \ := \ \Image  \left( \ev \colon W \otimes \Oc \ \to \ E \right) , \label{EW} \end{equation}
the subsheaf generated by $W$. The sheaf $E_W$ is locally free since $C$ has dimension one, but it is not saturated in general.

\begin{definition} \label{alphaSt}
Let $\alpha$ be a positive real number. The \textsl{$\alpha$-slope} of a coherent system $( E, V )$ is defined by
\[ \mu_\alpha ( E, V ) \ := \ \frac{\deg ( E ) + \alpha \cdot \dim ( V )}{\rank ( E )} . \]
Then $(E, V)$ is said to be \textsl{$\alpha$-semistable} if $\mu_\alpha ( F, W ) \le \mu_\alpha ( E, V )$ for all proper nonzero coherent subsystems $( F, W )$, and \textsl{$\alpha$-stable} if inequality is strict for all such $( F, W )$.
\end{definition}

\noindent King and Newstead \cite{kingnewstead} constructed moduli spaces for coherent systems (called ``Brill--Noether pairs'') using GIT:

\begin{theorem}[Theorem 1 of \cite{kingnewstead}] \label{moduli}
For each $\alpha > 0$, there exists a projective coarse moduli scheme $G_\alpha ( r, d, n )$ for S-equivalence classes of $\alpha$-semistable coherent systems of type $(r, d, n)$.
\end{theorem}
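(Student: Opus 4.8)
The statement is Theorem 1 of \cite{kingnewstead}, so here I only sketch the strategy I would follow to establish it. The plan is to realise $G_\alpha(r,d,n)$ as a GIT quotient of a parameter scheme by a reductive group, in the spirit of Mumford's construction of moduli of bundles and its extension to decorated sheaves by Le Potier and others; an appealing alternative is to reduce, after a suitable twist turning each bundle into a Kronecker-type module, to King's GIT construction of moduli of representations of a finite-dimensional algebra.

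First I would prove \textbf{boundedness}: the family of $\alpha$-semistable coherent systems of fixed type $(r,d,n)$ is bounded. Applying the defining inequality $\mu_\alpha(F,W) \le \mu_\alpha(E,V)$ to subbundles $F \subseteq E$ with $W = V \cap H^0(C,F)$, and using $\dim(W) \ge 0$, yields $\mu(F) \le \mu(E) + \alpha n / r$, a uniform upper bound on the slopes of subbundles. By Grothendieck's boundedness lemma the underlying bundles $E$ then vary in a bounded family, so I may fix $m \gg 0$ such that $E(m)$ is globally generated with $H^1(C, E(m)) = 0$ for every member, and $N := h^0(C, E(m)) = d + rm + r(1-g)$ is constant.

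Next I would construct the \textbf{parameter space}. Fixing an $N$-dimensional vector space $H$, evaluation of $E(m)$ gives a quotient $H \otimes \Oc(-m) \twoheadrightarrow E$, i.e.\ a point of a Quot scheme $\Quot$ with the relevant Hilbert polynomial; let $R \subseteq \Quot$ be the locally closed subscheme where the quotient is locally free and $H \xrightarrow{\sim} H^0(C, E(m))$. To encode the subspace $V$ — recording the datum of the morphism $V \otimes \Oc \to E$, rather than $V \subseteq H^0(C,E)$ directly, since $h^0(C,E)$ need not be constant on the family — I would enlarge $R$ by a relative Grassmannian (or relative scheme of such morphisms) to obtain a parameter scheme $\widehat R$ carrying a universal coherent system. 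The group $\mathrm{SL}(H)$ acts on $\widehat R$, with orbits corresponding precisely to isomorphism classes of coherent systems.

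The \textbf{crux} is then to choose an $\mathrm{SL}(H)$-linearised ample line bundle on a projective completion of $\widehat R$, with weights depending on $\alpha$ and $m$, so that a point is GIT-(semi)stable if and only if the corresponding coherent system is $\alpha$-(semi)stable. This is the main obstacle: one must evaluate the Hilbert--Mumford function along an arbitrary one-parameter subgroup of $\mathrm{SL}(H)$, translate the associated weighted filtration of $H$ into a filtration of $(E,V)$ by coherent subsystems, and check that the resulting numerical GIT inequality coincides, for $m$ large, with the slope inequality of Definition \ref{alphaSt}. Granting this, Mumford's GIT produces a projective quotient $\widehat R^{ss} /\!/ \mathrm{SL}(H)$ whose points correspond to closed orbits in the semistable locus; these are exactly the S-equivalence classes, identified as in the sheaf case via Jordan--Hölder filtrations. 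A final verification that this quotient corepresents the moduli functor identifies it as the desired projective coarse moduli scheme $G_\alpha(r,d,n)$.
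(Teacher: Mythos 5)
The paper does not prove this statement: it is quoted verbatim as Theorem 1 of \cite{kingnewstead}, so there is no internal proof to compare your sketch against. That said, your outline follows the standard strategy and is consistent with what King and Newstead actually do --- boundedness of the family, a parameter scheme built from a Quot scheme augmented to record the datum $V \otimes \Oc \to E$, a linearised $\mathrm{SL}(H)$-action, and a Hilbert--Mumford computation matching GIT (semi)stability with $\alpha$-(semi)stability. (Your ``appealing alternative'' of reducing to King's moduli of representations of a finite-dimensional algebra is in fact closer to their actual route than the Simpson/Le Potier-style construction you lead with.) The one thing to be clear about is that what you have written is a roadmap rather than a proof: the step you yourself label the crux --- translating weighted filtrations of $H$ under a one-parameter subgroup into filtrations of $(E,V)$ by coherent subsystems and verifying that the numerical criterion reproduces exactly the inequality of Definition \ref{alphaSt} for $m \gg 0$ --- is where essentially all the work of \cite{kingnewstead} lies, and it is only asserted, not carried out. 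Also note that one must take some care with S-equivalence: $\alpha$-semistable coherent systems admit Jordan--H\"older filtrations by coherent subsystems of the same $\alpha$-slope, and identifying closed orbits with S-equivalence classes requires checking this in the coherent-system category, not merely importing the sheaf case. As a citation-level justification your sketch is adequate; as a standalone proof it is incomplete at precisely the point you flag.
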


\noindent This shows in particular that the notion of ``generality'' makes sense for coherent systems. Using the fact that $C$ is complete, it is not hard to check that generatedness is an open property in families of coherent systems. Furthermore, suppose that $\alpha$ is close to zero. Then it follows from the discussion on \cite[p.\ 3]{but} that
\begin{itemize}
\item if $E$ is a stable vector bundle, then $(E, V)$ is $\alpha$-stable;
\item if $E$ is not semistable, then $(E, V)$ is not $\alpha$-semistable; and
\item if $(E, V)$ is $\alpha$-semistable, then $E$ is semistable.
\end{itemize}

\noindent In particular, this shows why Conjecture \ref{StrongButlerConjecture} is stronger than Conjecture \ref{ButlerConjecture} for general curves.

\subsection{Linear semistability, Butler diagrams and subbundles of dual span bundles}

\noindent Let $( L, V )$ be a generated linear series over $C$. We recall the following equivalent formulation \cite[Definition 2.2]{CMT} of linear (semi)stability of the image of $C \to \PP V^\vee$.

\begin{definition} \label{LinStRankOne}
Let $( L, V )$ be a generated coherent system of type $(1, d, n)$ over a curve $C$; equivalently, a base point free $g^{n-1}_d$ on $C$. Then $( L, V )$ is \textsl{linearly semistable} if for any linear subspace $W \subset V$ of dimension $w \ge 2$, we have
\begin{eqnarray*}
\frac{\deg (L_W)}{w-1} \ \geq \ \frac{\deg(L)}{n - 1}
\end{eqnarray*}
where $L_W$ is the invertible subsheaf generated by $W$ as in (\ref{EW}). If inequality is strict for all such $W$, then $( L, V )$ is \textsl{linearly stable}.
\end{definition}

\begin{remark} \label{SlopeEquivRankOne}
Let $(L, V)$ be as above, and consider the dual span bundle $\mlv$ defined as in (\ref{DefnDSB}). For any generated subsystem $( L', W )$ of $( L, V )$, there exists a commutative diagram 
\begin{equation*}
  \xymatrix{0  \ar[r]  & M_{W,L'}   \ar[r] \ar@{^{}->}[d]      &  W \otimes \Oc  \ar[r] \ar@{^{}->}[d] & L'   \ar[r] \ar[d]  & 0\\
				0\ar[r]	& \mlv \ar[r]_{}& V \otimes \Oc  \ar[r]   & L \ar[r] & 0  .  }
				\end{equation*}
As noted in \cite[Remark 3.2]{mistrettastoppino}, the condition of being linearly stable is equivalent to the bundle $\mlv$ not being destabilized by the subbundle $M_{W, L'}$ for any generated subseries $( L', W )$ of $( L, V )$.
\end{remark}

\noindent The above may be generalized as follows. Given a subbundle $S \subseteq \mlv$, as observed in \cite[{\S} 1]{butler} and \cite[{\S} 2]{CT} there exist a subspace $W \subseteq V$ and a bundle $F_S$ fitting into the following diagram:

\begin{equation}\label{butlerdiagram}
 \xymatrix{0  \ar[r]  & S  \ar[r] \ar@{^{(}->}[d]  &  W \otimes \Oc \ar[r] \ar@{^{(}->}[d] & F_S \ar[r] \ar[d]^{\alpha}  & 0 \\
0 \ar[r]	& \mlv \ar[r] & V \otimes \Oc  \ar[r]   & L \ar[r] & 0  . }
\end{equation}

\noindent Indeed, we define $W \hookrightarrow V$ by $W^\vee := \Image \left( V^{\vee}\stackrel{\phi}{\rightarrow} H^0 ( C, S^\vee ) \right)$. Note that $W^{\vee}$ generates $S^{\vee}$ because $V^\vee$ generates $\mev^\vee$. 
 The bundle $F_S$ can be defined by
\[
F_S^{\vee} \ := \ \Ker \left( W^{\vee} \otimes \Oc \ \rightarrow \ S^\vee \right) \ = \ M_{W^\vee , S^\vee} .
\]
We call (\ref{butlerdiagram}) the \textsl{Butler diagram of $( L, V )$ by $S$}. When $V = H^0 ( C, L )$, we refer simply to the \textsl{Butler diagram of $L$ by $S$}.

\vskip2mm

\noindent 
By \cite[{\S} 1]{butler}, the following properties hold in (\ref{butlerdiagram}).
		
		\begin{enumerate}
		\renewcommand{\labelenumi}{(\alph{enumi})}
	
      \item $W$ is a subspace of $H^0 ( C, F_S )$.
		
		\vspace{.05cm}
		
			\item The bundle $F_S$ is generated by $W$, and $h^0 ( C, F_S^\vee ) = 0$.
		
		\vspace{.05cm}
		
			\item The induced morphism $\alpha \colon F_S \rightarrow L$ is not zero.
			
			\vspace{.05cm}
			
			\item Let $S \subset \mlv$ be a subbundle of maximal slope. Then $\deg (F_S) \leq \deg (I)$, where $I = \Image (\alpha)$.
			Moreover, if $S$ is a destabilizing bundle of $\mlv$, then $\rank (F_S) = 1$ if and only if $\deg (F_S) = \deg (I)$.

		\end{enumerate}

\begin{remark}
The Butler diagram is a particular case of a more general construction: Given a generated coherent system $( E, V )$ of type $(r, d, n)$ over $C$, where $r \ge 1$, the dual span bundle $\mev = \Ker \left( V \otimes \Oc \to E \right)$ gives rise to a Butler diagram defined in the same way as for rank one above (see for instance \cite{but}). The above properties (a)--(c) are still valid, and property (d) must be modified as follows. Let $S \subset \mev$ be a subbundle of maximal slope. Then $f \leq \deg (I)$ where $I := \Image (\alpha)$. Moreover, if $S$ is a destabilizing bundle of $\mev$, then $\rank (F_S) = \rank(I)$ if and only if $f = \deg (I)$.
\end{remark}

\noindent As we shall see, the Butler diagram plays an important role in connecting linear stability of $(E, V)$ with slope stability of $\mev$.

\section{Review of results on linear stability for generated linear series} \label{RankOne}

\noindent Let $C$ be an irreducible projective smooth complex curve of genus $g \geq 2$, and let $L \in Pic^d(C)$ be a globally generated line bundle. Consider a generating subspace $V\subseteq H^0 ( C, L )$ of dimension $n$. The following conjecture concerns linear stability of the pair $( L, V )$.
\vskip2mm

\begin{conjecture}[Conjecture 6.1 of \cite{mistrettastoppino}]
    Let $( L, V )$ be a generated linear series as above. If $\deg (L) - 2 ( \dim (V) - 1 ) \leq \Cliff (C)$, then linear (semi)stability of $( L, V )$ is equivalent to (semi)stability of $\mlv$.
    \label{Conj1}
\end{conjecture}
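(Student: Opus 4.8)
The implication from slope-(semi)stability of $\mlv$ to linear (semi)stability of $(L,V)$ holds with no hypothesis on the degree: by Remark \ref{SlopeEquivRankOne}, linear (semi)stability says exactly that no dual span bundle $M_{W,L_W}$ of a generated rank-one subsystem destabilizes $\mlv$, and if $\mlv$ is (semi)stable then no subbundle destabilizes it at all. So the content is the converse, and I would attack it by contraposition through the Butler diagram (\ref{butlerdiagram}). Assume $\mlv$ is not semistable and let $S \subset \mlv$ be a subbundle of maximal slope, so $\mu(S) > \mu(\mlv) = -\deg(L)/(\dim V - 1)$. Feeding $S$ into (\ref{butlerdiagram}) produces a subspace $W \subseteq V$, a bundle $F_S$ generated by $W$ with $h^0(C, F_S^\vee) = 0$, and a nonzero map $\alpha \colon F_S \to L$ whose image $I := \Image(\alpha)$ is the subsheaf $L_W$ generated by $W$ (the stable case is identical with non-strict inequalities throughout).

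The plan is then to compare $S$ with the rank-one subsystem $(L_W, W)$. Since $\alpha$ factors the evaluation $W \otimes \Oc \to L$ through $F_S$, one obtains $S \subseteq M_{W,L_W}$ with quotient $M_{W,L_W}/S \cong \Ker(\alpha) =: K$, a sheaf of rank $\rank(F_S) - 1$ and degree $\deg(F_S) - \deg(I)$. To violate linear semistability it suffices to show that this rank-one subsystem is itself destabilizing, i.e. $\mu(M_{W,L_W}) \ge \mu(\mlv)$; as $M_{W,L_W}$ is the extension of $K$ by $S$ with positive weights and $\mu(S) > \mu(\mlv)$, this reduces to the single inequality $\mu(K) \ge \mu(\mlv)$, equivalently to the degree-deficit bound
\[ \deg(I) - \deg(F_S) \ \le \ \frac{\rank(F_S) - 1}{\dim V - 1} \cdot \deg(L) . \]

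When $\rank(F_S) = 1$ this deficit vanishes by property (d) of (\ref{butlerdiagram}), so $S = M_{W,L_W}$ destabilizes and we are done with no numerical hypothesis needed. The whole difficulty is therefore concentrated in the case $\rank(F_S) \ge 2$, where $\alpha$ has a positive-rank kernel and $\deg(I)$ may strictly exceed $\deg(F_S)$. This is precisely where I would bring in the hypothesis $\deg(L) - 2(\dim V - 1) \le \Cliff(C)$: the idea is to bound the deficit $\deg(I) - \deg(F_S) = -\deg(K)$ from above by applying Clifford's theorem to the generated line subsystem $(I, W)$ together with a generalised Clifford inequality for $F_S$ and $\Ker(\alpha)$ (in the spirit of \cite{langenewstead}), exploiting that $F_S$ is globally generated with $h^0(C, F_S^\vee) = 0$. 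The hypothesis should be exactly what forbids $L$, and hence its subsheaves $I = L(-D)$, from being special enough to permit a large deficit.

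The main obstacle is this last step. Controlling $\Ker(\alpha)$ when $\rank(F_S) \ge 2$ amounts to ruling out a genuinely higher-rank destabilizer of $\mlv$ that is not witnessed by any rank-one subsystem, and converting the Clifford bound $\deg(L) - 2(\dim V - 1) \le \Cliff(C)$ into the slope inequality $\mu(\Ker \alpha) \ge \mu(\mlv)$ appears to require more than Clifford's theorem for a single line bundle: the dangerous configurations are the special $I = L(-D)$ carrying many sections, while the non-special range is dispatched by Riemann--Roch. This gap is what keeps the statement conjectural in general; \cite[Theorem 1.1]{mistrettastoppino} establishes the equivalence after imposing additional conditions that control $\rank(F_S)$ and so suppress the deficit, and a proof of the full conjecture would consist in removing those conditions by means of a sharp Clifford-type estimate on $F_S$.
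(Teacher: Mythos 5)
The first thing to note is that the paper does not prove this statement, and there is no proof to compare yours against: it is Conjecture 6.1 of \cite{mistrettastoppino}, quoted here as Conjecture \ref{Conj1} precisely because it is open, and indeed the paper goes on to record that \cite[Proposition 8.4]{mistrettastoppino} produces, on any curve, a non-complete linearly stable $(L,V)$ with $\mlv$ unstable, presented as a counterexample to Conjecture \ref{Conj1} in the non-complete case. That is exactly why the conjecture is then reformulated as Conjectures \ref{Conj2} and \ref{Conj3}, and why {\S} \ref{Hyperell} adds further examples of the same flavour over hyperelliptic curves. So the statement you were asked to prove is, according to the surrounding text, false as stated for non-complete series and open for complete ones.

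As for the content of your proposal: the easy direction and the reduction via the Butler diagram (\ref{butlerdiagram}) are sound. The identification $\Image (\alpha) = L_W$, the inclusion $S \subseteq M_{W, L_W}$ with quotient $\Ker(\alpha)$, the reduction of the problem to the deficit bound $\deg(I) - \deg(F_S) \le \frac{\rank (F_S) - 1}{\dim (V) - 1} \cdot \deg(L)$, and the disposal of the case $\rank (F_S) = 1$ via property (d) all check out, and this is precisely the mechanism behind the known partial results in \cite[Theorem 6.3]{mistrettastoppino} and \cite{CT}. The gap you flag, namely controlling $\Ker(\alpha)$ when $\rank (F_S) \ge 2$, is the genuine obstruction. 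Where the proposal goes wrong is the closing claim that a sufficiently sharp Clifford-type estimate would yield the full conjecture: the counterexample cited above shows that no such estimate can exist in general, because for suitable non-complete $(L,V)$ the destabilization of $\mlv$ is genuinely not witnessed by any rank-one subsystem and the deficit really is too large. Any correct completion must therefore either restrict to complete series (Conjecture \ref{Conj3}) or impose a further numerical bound such as $\deg (L) \le \gamma \cdot (\dim (V) - 1)$ (Conjecture \ref{Conj2}), and even under those restrictions the statement remains open.
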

This conjecture was proven under certain conditions:
\vskip2mm
\begin{theorem}[Theorem 6.3 of \cite{mistrettastoppino}]
    Conjecture \ref{Conj1} holds in the following cases:
    \begin{itemize}
        \item $H^0( C, L ) = V$
        \item $\deg (L) \leq 2g - \Cliff (C) + 1$
        \item $V \ne H^0 ( C, L)$ and $\codim_{H^0(C, L)} (V) < h^1 ( C, L ) + g/(\dim (V) - 2)$
        \item $\deg (L) \geq 2g$ and $\codim_{H^0 ( C, L )} (V) \leq (\deg (L) - 2g)/2$.
    \end{itemize}
\end{theorem}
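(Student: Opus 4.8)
The plan is to establish, in each of the four cases, the nontrivial implication. As recalled before Definition~\ref{LinStRankOne}, slope (semi)stability of $\mlv$ always implies linear (semi)stability of $(L,V)$, so it suffices to prove the converse under the standing hypothesis $\deg(L) - 2(\dim(V)-1) \le \Cliff(C)$ of Conjecture~\ref{Conj1}. I would argue by contraposition. Suppose $\mlv$ is not semistable and let $S \subset \mlv$ be a subbundle of maximal slope, so that $\mu(S) > \mu(\mlv)$; the stable case is identical upon allowing $\mu(S) \ge \mu(\mlv)$ and using the strict form of linear stability throughout. Form the Butler diagram~(\ref{butlerdiagram}) of $(L,V)$ by $S$.

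From the diagram I extract the subspace $W \subseteq V$, the generated bundle $F_S$ with $h^0(C,F_S^\vee)=0$, and the nonzero morphism $\alpha\colon F_S \to L$. Because $F_S$ is generated by $W$, its image $I := \Image(\alpha)$ equals the generated line subbundle $L_W \subseteq L$, and $W \hookrightarrow H^0(C,I)$. Factoring $W\otimes\Oc \to F_S \xrightarrow{\alpha} I$ exhibits $S = M_{W,F_S}$ as a subsheaf of $M_{W,I}$, and $M_{W,I}$ is exactly the dual span bundle of the generated subsystem $(I,W) = (L_W, W)$. Hence Definition~\ref{LinStRankOne} (via Remark~\ref{SlopeEquivRankOne}) gives $\mu(M_{W,I}) \le \mu(\mlv)$. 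The first key step is the rank-one case: if $\rank(F_S)=1$, then property~(d) of the Butler diagram forces $\deg(F_S)=\deg(I)$, so $\alpha$ is an isomorphism of $F_S$ onto $I$ and $S = M_{W,I}$; this yields $\mu(S) \le \mu(\mlv)$, contradicting the choice of $S$. Therefore any destabilizing $S$ must satisfy $\rank(F_S) \ge 2$.

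It remains to exclude $\rank(F_S) \ge 2$, and I expect this to be the main obstacle. Writing $N := \Ker(\alpha)$, the snake lemma applied to the rows of~(\ref{butlerdiagram}) produces $0 \to S \to M_{W,I} \to N \to 0$ with $\rank(N) = \rank(F_S) - 1 \ge 1$ and $\deg(N) = \deg(F_S) - \deg(I) < 0$, while $\mu(S) > \mu(M_{W,I})$. The idea is to combine three ingredients: the destabilizing inequality for $S$; the estimate $h^0(C,I) \ge \dim(W)$ for the generated line bundle $I$; and the generation bound $\deg(F_S) \ge \dim(W) - \rank(F_S)$ coming from $h^0(C,F_S) \le \deg(F_S) + \rank(F_S)$. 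Feeding these into Clifford's theorem when $I$ is special, and into Riemann--Roch when it is not, should exhibit a line bundle contributing to the Clifford index a value strictly below $\deg(L) - 2(\dim(V)-1)$, contradicting $\deg(L) - 2(\dim(V)-1) \le \Cliff(C)$. Pinning down this contradiction when $\rank(F_S) \ge 2$, rather than merely for the line bundle $I$, is the delicate point.

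Finally, the four cases are precisely the numerical regimes in which the Clifford estimate closes. Completeness $V = H^0(C,L)$ controls $h^0(C,I)$ against $h^0(C,L)$ directly; the bounds $\deg(L) \le 2g - \Cliff(C)+1$ and $\deg(L) \ge 2g$ fix the speciality of $L$, and hence that of $I$, at the two ends of the degree range; and the codimension condition $\codim_{H^0(C,L)}(V) < h^1(C,L) + g/(\dim(V)-2)$ bounds how far the non-complete $W$ can sit inside $H^0(C,I)$. In each regime the arithmetic rules out $\rank(F_S) \ge 2$, completing the proof. Apart from this Clifford bound, the argument is the formal bookkeeping of the Butler diagram and is uniform across the cases.
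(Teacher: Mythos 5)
This statement is quoted in the paper's survey section as Theorem 6.3 of \cite{mistrettastoppino}; the paper itself supplies no proof, so your attempt can only be measured against the result itself. Your structural skeleton is the right one and matches the standard approach: reduce to a maximal-slope subbundle $S \subset \mlv$, form the Butler diagram, use property (d) to dispose of the case $\rank(F_S)=1$ (where $S = M_{W, L_W}$ and linear semistability gives $\mu(S) \le \mu(\mlv)$ directly), and isolate $\rank(F_S) \ge 2$ as the remaining case. The exact sequence $0 \to S \to M_{W,I} \to N \to 0$ with $\deg(N) < 0$ is also correctly derived.

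However, there is a genuine gap, and you name it yourself: the entire content of the theorem is the elimination of the case $\rank(F_S) \ge 2$ under each of the four numerical hypotheses, and your proposal replaces this with the declaration that ``in each regime the arithmetic rules out $\rank(F_S) \ge 2$.'' Nothing in what you wrote explains why those four particular conditions (and not others) suffice, which is precisely what distinguishes the theorem from the general Conjecture \ref{Conj1}. The missing ingredient is a mechanism for converting the rank $\ge 2$ bundle $F_S$ --- generated by $W$, with $h^0(C,F_S^\vee)=0$ --- into a \emph{line bundle} contributing to the Clifford index; in Mistretta--Stoppino this is done by passing to $\det(F_S)$ and using a Paranjape--Ramanan type bound $h^0(C,F_S) \le h^0(C,\det F_S) + \rank(F_S) - 1$ for generated bundles with no sections of the dual, after which $\det(F_S)$ has controlled degree and at least $\dim(W) - \rank(F_S) + 1$ sections, and the four hypotheses are exactly what is needed to make the resulting Clifford estimate close (completeness or the codimension bound controls how much smaller $\dim(W)$ can be than $h^0(C,\det F_S)$; the two degree conditions control the speciality of $L$ and hence of $\det F_S$). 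Without this reduction to a line bundle and the case-by-case arithmetic, the proposal is a correct frame around an empty picture.
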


Also, in section 8 of \cite{mistrettastoppino}, the authors show that there exists a counterexample to Conjecture \ref{Conj1} on any curve for non-complete linear series:
\vskip2mm
\begin{theorem}[Proposition 8.4 of \cite{mistrettastoppino}]
    On any curve $C$ there exists a non-complete linear system $( L, V )$ which is linearly stable and such that the vector bundle $\mlv$ is unstable.
\end{theorem}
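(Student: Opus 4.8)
The plan is to realize $(L,V)$ as the pullback of a suitable non-complete linear series on $\PP^1$ under a finite morphism $f \colon C \to \PP^1$, which exists for every smooth projective curve. The strategy rests on the observation that all the quantities entering Definition \ref{LinStRankOne}, as well as the (in)stability of $\mlv$, behave transparently under such a pullback. If $(\mathcal L, \mathcal V)$ is a generated linear series on $\PP^1$ and we set $L = f^* \mathcal L$ and $V = f^* \mathcal V$, then $f^*$ is injective on global sections (as $f$ is dominant), so $\dim V = \dim \mathcal V =: n$; the pullback of a base point free series is base point free; and $V \subsetneq f^* H^0 ( \PP^1, \mathcal L ) \subseteq H^0 ( C, L )$, so $( L, V )$ is a genuine non-complete generated coherent system of type $(1, \deg f \cdot \deg \mathcal L, n)$.

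First I would verify that the dual span bundle pulls back, namely $\mlv \cong f^* M_{\mathcal V, \mathcal L}$. This follows by applying the exact functor $f^*$ (a finite morphism of smooth curves is flat) to the defining sequence \eqref{DefnDSB} of $M_{\mathcal V, \mathcal L}$ and identifying the resulting evaluation sequence with that of $(L,V)$. Since every bundle on $\PP^1$ splits, writing $M_{\mathcal V, \mathcal L} = \bigoplus_i \opo ( - a_i )$ gives $\mlv = \bigoplus_i f^* \opo ( - a_i )$, whose summands have slopes $- a_i \deg f$; if the $a_i$ are not all equal, then $\mlv$ is unstable on $C$. For linear stability I would use that for $W \subseteq V$ corresponding to $W' \subseteq \mathcal V$ under the isomorphism $f^*$, flatness gives $L_W = f^* ( \mathcal L_{W'} )$, hence $\deg ( L_W ) = \deg f \cdot \deg ( \mathcal L_{W'} )$ while $\deg ( L ) = \deg f \cdot \deg ( \mathcal L )$. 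Thus every ratio in Definition \ref{LinStRankOne} is scaled by the common factor $\deg f$, and $(L,V)$ is linearly stable over $C$ if and only if $( \mathcal L, \mathcal V )$ is linearly stable over $\PP^1$.

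It therefore suffices to produce the example on $\PP^1$. I would take $\mathcal L = \opo ( e )$ and a general subspace $\mathcal V \subseteq H^0 ( \PP^1, \opo ( e ) )$ of dimension $n$, with $e, n$ chosen so that $n \le e$, $(n-1) \nmid e$, and $e > 2(n-1)$. Instability of $M_{\mathcal V, \mathcal L}$ is then automatic: it has rank $n-1$ and degree $-e$, so its slope $-e/(n-1)$ is non-integral, and no bundle on $\PP^1$ of non-integral slope is semistable. For linear stability I would bound, for each $2 \le w \le n-1$, the maximal degree $c$ of a common factor of a $w$-dimensional subspace $W \subseteq \mathcal V$: an incidence computation on $\Gr ( n, e+1 )$ shows that a general $\mathcal V$ carries a $w$-dimensional subspace with common factor of degree $\ge c$ only when $w(n-w) - c(w-1) \ge 0$, i.e. $c \le w(n-w)/(w-1)$. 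Hence $\deg ( \mathcal L_W ) \ge e - \lfloor w(n-w)/(w-1) \rfloor$ for all such $W$, and the bound $e > 2(n-1)$ forces $\deg ( \mathcal L_W )/(w-1) > e/(n-1)$ for every proper $W$, so $( \mathcal L, \mathcal V )$ is linearly stable.

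The crux, and the step I expect to need the most care, is this incidence computation together with the attendant genericity: one must check that the locus in $\Gr ( n, e+1 )$ of subspaces carrying a $w$-dimensional piece with an over-large common factor is a proper closed subvariety, so that a general $\mathcal V$ avoids it for all $w$ simultaneously. Conceptually, this is exactly what forces the destabilizing subbundle $S \subset M_{\mathcal V, \mathcal L}$ to have Butler quotient $F_S$ of rank $\ge 2$ in \eqref{butlerdiagram}: by Remark \ref{SlopeEquivRankOne}, linear stability only rules out destabilization by the rank-one (subsystem) bundles $M_{W, L'}$, whereas the splitting of $M_{\mathcal V, \mathcal L}$ destabilizes through a subbundle not of this form. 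A concrete instance is $n = 3$, $e = 5$: a general $\mathcal V \subseteq H^0 ( \opo ( 5 ) )$ has $M_{\mathcal V, \mathcal L} = \opo ( -2 ) \oplus \opo ( -3 )$ (unstable), no pencil in $\mathcal V$ shares three base points, so $\deg ( \mathcal L_W ) \ge 3$ for every pencil $W$ while $3 > 5/2$; pulling back by any finite $f \colon C \to \PP^1$ then yields the desired $(L,V)$ on $C$.
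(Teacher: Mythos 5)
Your proposal is correct and follows essentially the same route as the cited proof of \cite[Proposition 8.4]{mistrettastoppino} (and the analogous constructions in {\S} \ref{Hyperell} and Theorem \ref{counterex} of this paper): one pulls back a general non-complete, linearly stable series on $\PP^1$, whose dual span bundle splits with unequal degrees and is hence unstable, via a finite map $C \to \PP^1$, using that linear stability and instability of the DSB are both preserved under pullback (cf.\ Lemma \ref{PullbackLinSt}). The incidence count you sketch on $\Gr(n, e+1)$ is exactly the content of \cite[Proposition 8.2]{mistrettastoppino}, and your resulting numerical condition $e > 2(n-1)$ is consistent with it.
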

\vskip2mm

\noindent In the proof of this result, authors produce a linearly stable $( L, V )$ satisfying
$$ \deg (L) \ > \ \gamma \cdot ( \dim (V) - 1 ) $$
where $\gamma$ is the gonality of $C$, and where $\mlv$ is unstable. Consequently, they formulate the following two conjectures for complete and non-complete linear series.
\vskip2mm

\begin{conjecture}[Conjecture 8.6 of \cite{mistrettastoppino}] \label{Conj2}
    Let $( L, V )$ be a base point free non-complete linear series. If $\deg (L) \leq \gamma \cdot ( \dim (V) - 1 )$, then linear (semi)stability of $( L, V )$ is equivalent to (semi)stability of the vector bundle $\mlv$.
\end{conjecture}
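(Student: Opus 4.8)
\noindent The plan is as follows. The asserted equivalence has one free implication: by Remark \ref{SlopeEquivRankOne}, (semi)stability of $\mlv$ always implies linear (semi)stability of $( L, V )$, with no hypothesis on $\deg ( L )$. So it remains to prove the converse, and the degree bound $\deg ( L ) \le \gamma \cdot ( \dim ( V ) - 1 )$ should be used precisely there. I would assume $( L, V )$ linearly (semi)stable together with this bound, suppose for contradiction that $\mlv$ is not (semi)stable, and choose a subbundle $S \subset \mlv$ of maximal slope with $\mu ( S ) \ge \mu ( \mlv )$. Forming the Butler diagram (\ref{butlerdiagram}) of $( L, V )$ by $S$ produces $W \subseteq V$, a bundle $F_S$, and a nonzero morphism $\alpha \colon F_S \to L$. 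The key point is that, since $F_S$ is generated by $W$, the composite $W \otimes \Oc \to F_S \stackrel{\alpha}{\to} L$ equals the evaluation of $W$, so $\Image ( \alpha ) = L_W$; property (d) then gives $\deg ( F_S ) \le \deg ( L_W )$. Writing $n = \dim ( V )$ and $d = \deg ( L )$, recall that $\mlv$ has rank $n - 1$ and slope $- d / ( n - 1 )$, while $\rank ( F_S ) = \dim ( W ) - \rank ( S )$ and $\deg ( F_S ) = - \deg ( S )$.

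\noindent The case $\rank ( F_S ) = 1$ is clean and needs no gonality. Here the surjection $F_S \to L_W$ is a map of line bundles of equal degree, hence an isomorphism, so by property (d) we have $\deg ( L_W ) = \deg ( F_S ) = - \deg ( S )$ while $\rank ( S ) = \dim ( W ) - 1$. Rewriting $\mu ( S ) \ge \mu ( \mlv ) = - d / ( n - 1 )$ then yields
\[ \frac{\deg ( L_W )}{\dim ( W ) - 1} \ \le \ \frac{d}{n - 1} , \]
which contradicts Definition \ref{LinStRankOne} (in the semistable case this contradiction is strict whenever $S$ strictly destabilizes). Thus any destabilizing subbundle whose Butler quotient has rank one is already excluded by linear (semi)stability alone.

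\noindent The case $\rank ( F_S ) \ge 2$ is the crux, and is exactly where the bound $\deg ( L ) \le \gamma \cdot ( \dim ( V ) - 1 )$ must do its work. Property (d) now gives only the non-tight estimate $\deg ( F_S ) \le \deg ( L_W )$, which points the wrong way to contradict linear stability, so a genuinely new input is needed. I would exploit the structure forced by $\rank ( F_S ) \ge 2$: the morphism $\alpha$ has rank-one image $L_W$ with kernel of rank $\rank ( F_S ) - 1 \ge 1$, and because the sections of $W$ inject into $H^0 ( C, L_W )$, the pair $( L_W, W )$ is a base point free linear series whose dual span bundle contains $S$ and, by linear stability applied to $W$, is destabilized by $S$. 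The aim would be to bound $\deg ( L_W )$ from below in terms of $\gamma$ and the ranks $\rank ( S ), \rank ( F_S )$ tightly enough that, together with $\mu ( S ) \ge \mu ( \mlv )$ and $d \le \gamma ( n - 1 )$, this configuration is ruled out. I expect this to be the main obstacle. A single gonality estimate such as $\deg ( L_W ) \ge \gamma$ is far too weak; and the sharpness of the Mistretta--Stoppino counterexample (Proposition 8.4 of \cite{mistrettastoppino}), which produces linear stability with $\mlv$ unstable exactly when $\deg ( L ) > \gamma ( \dim ( V ) - 1 )$, shows that the threshold is attained by a rank $\ge 2$ Butler quotient. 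Natural tools to attack this step are the gonality sequence invariants of \cite{langenewstead}, a Clifford-type inequality for the globally generated bundle $F_S$ relating $h^0 ( C, F_S )$, $\rank ( F_S )$ and $\deg ( F_S )$, and an induction on $\rank ( F_S )$ through the reduction to $( L_W, W )$; turning any of these into a bound sharp enough to close the gap is, as far as I know, precisely what leaves Conjecture \ref{Conj2} open.
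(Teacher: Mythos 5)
This statement is not a theorem of the paper: it is Conjecture 8.6 of \cite{mistrettastoppino}, quoted verbatim as Conjecture \ref{Conj2}, and the paper offers no proof of it. There is therefore no argument in the paper against which to compare yours, and no complete proof should be expected. Your proposal is, appropriately, not a proof but a correct diagnosis of why the statement is open, and the partial reasoning you do supply is sound: the implication from (semi)stability of $\mlv$ to linear (semi)stability of $( L, V )$ is indeed free by Remark \ref{SlopeEquivRankOne}; the reduction of the converse to a destabilizing subbundle $S$ via the Butler diagram (\ref{butlerdiagram}) is the standard and correct strategy (it is exactly the mechanism used in \cite{CT} and \cite{mistrettastoppino}); and your treatment of the case $\rank ( F_S ) = 1$ is correct, since property (d) forces $\deg ( F_S ) = \deg ( L_W )$ there and the slope inequality for $S$ translates directly into a violation of Definition \ref{LinStRankOne}.

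The genuine gap is the one you name yourself: when $\rank ( F_S ) \ge 2$, property (d) only gives $\deg ( F_S ) \le \deg ( L_W )$, which is the wrong direction, and no argument currently known converts the hypothesis $\deg ( L ) \le \gamma \cdot ( \dim ( V ) - 1 )$ into the needed lower bound on $\deg ( F_S )$ or $\deg ( L_W )$. Your observation that $S$ sits inside $M_{W, L_W}$ and destabilizes it is correct but does not close this. The known partial results in this direction (the cases listed in Theorem 6.3 of \cite{mistrettastoppino}, and the Clifford-index arguments of \cite[{\S} 5]{mistrettastoppino}) all impose hypotheses beyond the gonality bound precisely to control this higher-rank Butler quotient, and the hyperelliptic examples of {\S} \ref{Hyperell} and the counterexamples of \cite{CMT} show how delicate the boundary is. In short: you have correctly identified that the statement is an open conjecture, your partial analysis is consistent with the literature, and the missing step you isolate is the actual obstruction.
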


\begin{conjecture}[Conjecture 8.7 of \cite{mistrettastoppino}] \label{Conj3}
For any curve $C$ and any line bundle $L$ on $C$, linear (semi)stability of $( L, H^0( C, L) )$ is equivalent to (semi)stability of the vector bundle $M_L$.
\end{conjecture}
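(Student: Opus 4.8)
The plan is to establish the nontrivial direction by contraposition, since the reverse implication (that slope (semi)stability of $M_L$ forces linear (semi)stability of $(L, H^0(C,L))$) is automatic, as recorded in Remark \ref{SlopeEquivRankOne}. Write $V := H^0(C,L)$, $d := \deg(L)$ and $n := h^0(C,L) = \dim V$, so that $M_L$ has rank $n-1$ and degree $-d$, with $\mu(M_L) = -d/(n-1)$. Assuming $M_L$ is not (semi)stable, I would produce a subspace $W \subset V$ violating the inequality of Definition \ref{LinStRankOne}. The main tool is the Butler diagram (\ref{butlerdiagram}).

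I would take $S \subset M_L$ to be a subbundle of maximal slope, so $\mu(S) \ge \mu(M_L)$ with strict inequality when $M_L$ is not semistable, and form its Butler diagram. By properties (a)--(d), we obtain a subspace $W \subseteq V$ with $\dim W = w \ge 2$, a bundle $F_S$ generated by $W$ with $h^0(C, F_S^\vee) = 0$, and a nonzero morphism $\alpha \colon F_S \to L$ with invertible image $I = \Image(\alpha)$. Since both rows of (\ref{butlerdiagram}) have total degree zero, $\deg(S) = -\deg(F_S)$; and because $F_S$ is generated by $W$, the image $I$ equals $L_W$, so $\deg(L_W) = \deg(I)$.

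The rank-one case is immediate and, pleasingly, explains the whole mechanism. If $\rank(F_S) = 1$ then $F_S = I = L_W$ and $S = M_{W, L_W}$, while property (d) gives $\deg(F_S) = \deg(I)$. Combining $\deg(S) = -\deg(L_W)$ and $\rank(S) = w-1$ with $\mu(S) \ge \mu(M_L)$ yields
\[ \frac{\deg(L_W)}{w-1} \ \le \ \frac{\deg(L)}{n-1}, \]
with strict inequality exactly when $M_L$ is not semistable; either way $(L,V)$ is not linearly (semi)stable. This confirms that the subbundles which linear (semi)stability directly controls are precisely those of the form $M_{W, L_W}$, and that the whole difficulty is to reduce an arbitrary destabilizing subbundle to one of this shape.

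The main obstacle is the case $\rank(F_S) \ge 2$, where property (d) gives $\deg(F_S) < \deg(I) = \deg(L_W)$: the subspace $W$ produced by the construction carries an image $L_W$ of too large degree, and since the denominator $w-1$ also exceeds $\rank(S)$, the displayed inequality no longer follows. One must therefore either exclude this configuration or replace $W$ by a smaller subspace. The natural objects to exploit are the kernel $\Ker(\alpha) \subset F_S$, of rank $\rank(F_S)-1$ and negative degree, together with the bundle $M_{W, L_W} \subseteq M_L$, which contains $S$; assuming $(L,V)$ linearly (semi)stable one finds $\mu(M_{W,L_W}) \le \mu(M_L) < \mu(S)$, so $S$ already destabilizes $M_{W, L_W}$. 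This suggests inducting on $\dim V$ whenever $W \subsetneq V$, although the subtlety is that $(L_W, W)$ is non-complete and linear stability is not obviously inherited by sub-series. The genuinely hard core is thus the case $W = V$ with $\rank(F_S) \ge 2$: here $I = L$, no reduction is available, and one must rule out destabilizing $S \subset M_L$ whose quotient $M_L/S$ embeds into a globally generated bundle $F_S$ surjecting onto $L$ with $h^0(C, F_S^\vee) = 0$. This is exactly where completeness of $V$ must enter and where the Clifford-index and degree hypotheses of \cite{mistrettastoppino} were used to force $\rank(F_S) = 1$; removing them for an arbitrary line bundle is the crux of the conjecture.
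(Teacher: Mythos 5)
There is no proof to compare against: the statement you were given is Conjecture 8.7 of \cite{mistrettastoppino}, recorded in this paper purely as a conjecture, and the paper immediately points out that it is \emph{false}. Theorem \ref{PlaneSeptic} (Theorem 4.1 of \cite{CMT}) exhibits, on any smooth plane curve of degree $7$, a general $L \in W^2_{15}(C)$ for which the complete linear series $(L, H^0(C,L))$ is base point free and linearly stable while $M_L$ is not semistable; the paper states explicitly that this is a counterexample to Conjecture \ref{Conj3}. So no argument along your lines can succeed, and the partial results you gesture at (Clifford index and degree hypotheses) are exactly the cases where the conjecture \emph{is} known, namely \cite[Theorem 6.3]{mistrettastoppino} and \cite[Corollary 4.1]{CT} for general Petri curves; the plane septic is Brill--Noether special and escapes all of them.

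That said, your analysis of the mechanism is accurate and locates the failure precisely. The easy direction and the case $\rank(F_S) = 1$ are handled correctly (this is the content of Remark \ref{SlopeEquivRankOne}), and the configuration you single out as "the genuinely hard core" --- $W = V$, $\rank(F_S) \ge 2$, so that $\deg(F_S) < \deg(I)$ and the destabilizing subbundle $S \subset M_L$ does not arise as $M_{W, L_W}$ for any sub-series --- is exactly the configuration realized by the counterexample. The correct conclusion to draw from your own analysis is not that this case is the crux still to be settled, but that it genuinely occurs and refutes the statement. Your proposed induction on $\dim V$ also cannot be salvaged: passing to $(L_W, W)$ leaves the complete-series hypothesis (and hence any inductive hypothesis tied to completeness), which is the same obstruction you already noted.
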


For the complete case, in \cite{CT} the authors dualize the Butler diagram of $L$ by $S$ to obtain a multiplication map 
$$ m_W \colon W \otimes H^0( C, \Kc ) \ \rightarrow \ H^0( C, S^\vee \otimes \Kc ) . $$

\noindent They prove that $m_W$ is surjective under some conditions and use this surjectivity on general curves to prove that Conjecture \ref{Conj3} is true under certain assumptions. Moreover, they gave conditions characterizing when $M_L$ fails to be stable for general curves:
\vskip2mm
\begin{theorem}[Corollary 4.1 of \cite{CT}]
    Let $L$ be a globally generated line bundle over a general Petri curve $C$ of genus $g>1$. Suppose that $h^0( C, L) = n$. Then
    \begin{itemize}
        \item Linear (semi)stability of $L$ is equivalent to (semi)stability of $M_L$.
        \item The vector bundle $M_L$ fails to be stable if and only if the following three conditions hold:
        \begin{enumerate}
            \item $h^1 ( C, L ) = 0$.
            \item $\deg (L) = g + n - 1$ and $n$ divides $g$.
            \item There is an effective divisor $Z$ with $h^0 ( C, L(-Z) ) = h^0( C, L ) - 1$ and $\deg (Z) = 1 + g/(n-1)$.
        \end{enumerate}
    \end{itemize}
\end{theorem}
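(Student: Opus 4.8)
The plan is to treat the equivalence first and then deduce the classification from the borderline case of the linear-stability inequality. One implication of the equivalence is formal: by Remark \ref{SlopeEquivRankOne}, (semi)stability of $M_L$ implies linear (semi)stability of $L$. For the converse I would argue by contraposition. Assume $M_L$ is not stable and let $S \subset M_L$ be a subbundle of maximal slope, so $\mu(S) \ge \mu(M_L) = -d/(n-1)$ where $d = \deg L$. The Butler diagram of $L$ by $S$ in (\ref{butlerdiagram}) produces a subspace $W \subseteq V = H^0(C,L)$ and a bundle $F_S$ generated by $W$, together with a nonzero $\alpha \colon F_S \to L$ with image $I = \Image(\alpha)$. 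If I can show $\rank(F_S) = 1$, then $F_S = L_W$ is the line subsheaf generated by $W$, so $S = M_{W, L_W}$, and the destabilizing inequality $\mu(S) \ge \mu(M_L)$ becomes exactly the failure of the inequality of Definition \ref{LinStRankOne} for $W$; that is, $L$ is not linearly stable. So the crux is to force $\rank(F_S) = 1$.

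By property (d) of the Butler diagram, for the maximal destabilizing $S$ one has $\deg(F_S) \le \deg(I)$, with $\rank(F_S) = 1$ if and only if $\deg(F_S) = \deg(I)$; thus it suffices to rule out $\deg(F_S) < \deg(I)$. To this end I would dualize the top row of (\ref{butlerdiagram}) to $0 \to F_S^\vee \to W^\vee \otimes \Oc \to S^\vee \to 0$, tensor by $\Kc$, and study the multiplication map $m_W \colon W^\vee \otimes H^0(C, \Kc) \to H^0(C, S^\vee \otimes \Kc)$. Because $H^0(C, M_L) = 0$ we get $H^0(C,S) = 0$, hence $H^1(C, S^\vee \otimes \Kc) = 0$ by Serre duality and $h^0(C, S^\vee \otimes \Kc)$ is computed by Riemann--Roch; moreover $\mathrm{coker}(m_W) \hookrightarrow H^1(C, F_S^\vee \otimes \Kc) \cong H^0(C, F_S)^\vee$. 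The plan is to prove that $m_W$ is surjective on a general Petri curve and to feed the resulting cohomological estimate back through property (d) to obtain $\deg(F_S) = \deg(I)$, hence $\rank(F_S) = 1$. I expect the surjectivity of $m_W$ to be the main obstacle: it is a multiplication statement for the vector bundle $S^\vee$ rather than for a line bundle, and to prove it one must reduce, by filtering $S^\vee$ or by a base-point-free pencil argument, to the injectivity of the Petri map $H^0(C,A) \otimes H^0(C, \Kc \otimes A^{-1}) \to H^0(C, \Kc)$ that the Gieseker--Petri theorem guarantees on a general curve.

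For the classification I would use the equivalence just proved: $M_L$ fails to be stable exactly when $L$ fails to be linearly stable. Since on a general Petri curve the complete series is linearly semistable, this failure must occur at the boundary, i.e.\ some complete subseries $(L(-Z), W)$ with $W = H^0(C, L(-Z))$ achieves equality $\deg(L_W)/(\dim W - 1) = d/(n-1)$, where $L_W = L(-Z)$. The remaining task is to decide, on a general Petri curve, for which numerical data such an extremal $Z$ exists. Here I would invoke Brill--Noether theory: the special series $(L(-Z), W)$ exists on the general curve only if its Brill--Noether number $\rho$ is non-negative, while the equality above, being the borderline of linear semistability, forces $\rho = 0$. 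Combining $\rho = 0$, the slope equality, and Riemann--Roch then pins down $h^1(C,L) = 0$ (equivalently $\deg L = g + n - 1$), the value $h^0(C, L(-Z)) = n - 1$, and $\deg Z = 1 + g/(n-1)$, and produces the stated divisibility condition on $g$. The subtlety in this part is to confirm that this $\rho = 0$ configuration is genuinely realized on the general curve and that no strict violation of linear semistability can occur, both of which again rest on the Gieseker--Petri control of the underlying multiplication maps.
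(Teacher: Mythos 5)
Your plan reproduces the strategy of Castorena--Torres-L\'opez that the paper itself only cites: this result is stated as Corollary 4.1 of \cite{CT} without proof, and the article's own summary of that proof --- dualize the Butler diagram of $L$ by a maximal destabilizing $S$ to obtain the multiplication map $m_W \colon W \otimes H^0(C,\Kc) \to H^0(C, S^\vee \otimes \Kc)$, prove its surjectivity on a general Petri curve via Gieseker--Petri, and feed this back through property (d) to force $\rank (F_S) = 1$ --- is exactly the route you describe. Your Brill--Noether/$\rho=0$ sketch for classifying the strictly semistable case likewise matches the numerology of the stated conditions, so the proposal follows essentially the same approach as the source.
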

\vskip2mm

\noindent For certain complete linear series on special curves, in \cite{CMT} the authors prove the following:
\vskip2mm 
\begin{theorem}[Theorem 4.1 of \cite{CMT}] \label{PlaneSeptic}
    On any smooth plane curve $C$ of degree $7$, a general element of $W^2_{15} (C)$ satisfies: 
    \begin{itemize}
        \item The complete linear series $(L, H^0 ( C, L ) )$ is base point free and linearly stable.
        \item The vector bundle $M_L$ is not semistable.        
    \end{itemize}
\end{theorem}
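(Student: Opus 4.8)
Throughout write $g = 15$ for the genus of the plane septic $C$, so that $\Kc = \cO_C(4)$, the gonality of $C$ is $6$ and $\Cliff(C) = 3$. Let $A := \cO_C(1)$ denote the (unique) $g^2_7$; from the restricted Euler sequence $M_A = \Omega^1_{\PP^2}(1)|_C$ is a rank two bundle of degree $-7$, and dualising $0 \to M_A \to H^0(A) \otimes \cO_C \to A \to 0$ exhibits $A^\vee = \cO_C(-1)$ as $\Ker\left( H^0(A)^\vee \otimes \cO_C \to M_A^\vee \right)$. For general $L \in W^2_{15}(C)$ one has $\dim H^0(C,L) = 3$ and, by Riemann--Roch, $h^1(C,L) = 2$; put $V := H^0(C,L)$. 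Note that $\deg(L) - 2(\dim V - 1) = 11 > \Cliff(C)$, so this lies outside the range of Conjecture \ref{Conj1}.

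The plan for base point freeness and linear stability is geometric. A base point $p$ of $L$ would force $L(-p) \in W^2_{14}(C)$; since $\rho(15,2,14) = 6$, a dimension count bounds the locus $\{ L'(p) : L' \in W^2_{14}(C),\ p \in C \}$ strictly below $\dim W^2_{15}(C)$, so the general $L$ is base point free and defines a morphism $\phi_L \colon C \to \PP V^\vee = \PP^2$ whose image is a nondegenerate curve of degree $15 / \deg(\phi_L)$. A sub-pencil $W \subset V$ corresponds to a point $x_W \in \PP^2$, and its base divisor is the fibre $\phi_L^{-1}(x_W)$; hence $(L,V)$ is linearly stable precisely when every fibre of $\phi_L$ has length $\leq 7$. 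I would then argue, again by dimension count, that for general $L$ the map $\phi_L$ is birational onto a nodal curve, so that every fibre has length $\leq 2$ and $\deg(L_W) \geq 13 > 15/2$ for all sub-pencils $W$. This gives both claims of the first bullet.

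For the instability of $M_L$ the key object is the multiplication map $m \colon H^0(C,A) \otimes H^0(C,L) \to H^0(C, A \otimes L)$. Since $\deg(A \otimes L) = 22$ and $h^1(A \otimes L) = h^0(\cO_C(3) \otimes L^\vee) = 0$ for general $L$ (the bundle $\cO_C(3) \otimes L^\vee$ has degree $6 < g$), we get $h^0(A \otimes L) = \chi(A \otimes L) = 8$, while the source of $m$ has dimension $9$. Thus $m$ has a nonzero element $\eta = \sum_i a_i \otimes s_i$ with $\sum_i a_i s_i = 0$. I would use $\eta$ to define a sheaf map $A^\vee \to V \otimes \cO_C$ by $\varphi \mapsto \sum_i \varphi(a_i(x))\, s_i$ for $\varphi \in A^\vee_x$; the relation $\sum_i a_i(x) \otimes s_i(x) = 0$ shows that the resulting section of $V \otimes \cO_C$ evaluates to zero in $L|_x$, so the map factors through $M_L = \Ker(V \otimes \cO_C \to L)$. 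Choosing the $s_i$ linearly independent makes it nonzero, hence injective, and so $A^\vee$ embeds as a sub-line-bundle of $M_L$. As $\deg(A^\vee) = -7 > -15/2$, the bundle $M_L$ is not semistable, proving the second bullet.

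Finally I would observe that this destabilisation is genuinely non-linear: a subbundle of $M_L$ coming from a sub-pencil $W$ is of the form $M_{W,L_W}$, a line bundle of degree $-\deg(L_W) \leq -8$ by the linear stability established above, whereas $\deg(A^\vee) = -7$. In Butler-diagram terms, $A^\vee$ sits in a diagram with $W = V$ and $F_{A^\vee}$ of rank two (related to $M_A^\vee$), rather than the rank one quotient produced by a linear subseries; this is exactly why linear stability and slope stability of $M_L$ diverge here. I expect the main obstacle to be the first part: because $C$ is a plane curve, $W^2_{15}(C)$ is not the generic Brill--Noether locus, so the estimates for the base-point, non-birational and high-multiplicity loci must be made using the geometry of plane septics (in particular $W^1_6(C)$ and $W^1_7(C)$), and proving that the general $g^2_{15}$ is birational onto a nodal curve is the delicate point. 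By contrast, once the numerology $\dim\left(H^0(A) \otimes H^0(L)\right) > h^0(A \otimes L)$ is secured, the instability in the second part is immediate.
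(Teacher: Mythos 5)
This theorem is imported verbatim from \cite[Theorem 4.1]{CMT}; the present paper states it without proof, so your attempt can only be measured against the known strategy. Your argument for the second bullet is the standard one and is essentially correct: $\dim\left( H^0(C,A)\otimes H^0(C,L)\right) = 9 > 8 = h^0(C, A\otimes L)$ forces $\Ker(m) = H^0(C, M_L\otimes A) = \Hom(A^\vee, M_L) \ne 0$, and a nonzero map from a line bundle into a bundle on an integral curve is injective, so $\cO_C(-1)$ is a subsheaf of $M_L$ of slope $-7 > -15/2 = \mu(M_L)$. One detail needs repair: you justify $h^1(C, A\otimes L)=0$ by saying that $\cO_C(3)\otimes L^\vee$ has degree $6 < g$ and $L$ is general, but $L$ is general only in $W^2_{15}(C)$, a locus of dimension $\ge \rho = 9$ inside the $15$-dimensional $\Pic^{15}(C)$, while $W^0_6(C)$ has dimension $6$; since $9+6=15$, genericity in $W^2_{15}$ alone does not exclude $h^0(C,\cO_C(3)\otimes L^\vee)>0$. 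The correct argument is that $h^0(C,\cO_C(3)\otimes L^\vee)>0$ forces $L = \cO_C(3)(-D)$ with $D$ effective of degree $6$, whence $h^0(C,L)\ge h^0(C,\cO_C(3)) - 6 = 4$, contradicting $h^0(C,L)=3$.

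The genuine gap is in the first bullet. All of your dimension counts there are keyed to Brill--Noether numbers, but a smooth plane septic is Brill--Noether special: $\rho$ is a \emph{lower} bound for the dimension of every component of $W^r_d(C)$ and never an upper bound, so writing ``$\rho(15,2,14)=6$'' gives no control on the base-point locus (indeed $\{A(D): D\in C^{(7)}\}$ already shows $\dim W^2_{14}(C)\ge 7$). Worse, the count you actually need for linear stability is genuinely tight rather than routine: failure of linear stability means $L = N(B)$ with $N$ a generated pencil of degree $6$ or $7$ (the gonality is $6$) and $B$ effective of degree $9$ or $8$, so the bad locus lies in $(W^1_6 + W^0_9)\cup(W^1_7+W^0_8)$, which even using the classification of low-degree pencils on plane septics ($\dim W^1_6 = 1$, $\dim W^1_7 = 2$) has dimension up to $10$, whereas every component of $W^2_{15}(C)$ is only guaranteed to have dimension $\ge \rho = 9$. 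A naive count therefore does not exclude that a whole component of $W^2_{15}(C)$ sits inside the bad locus; one must show that for fixed $N$ the condition $h^0(C, L\otimes N^\vee)>0$ cuts $W^2_{15}(C)$ in codimension at least two, which is a real cohomological incidence argument and is precisely the content of the proof in \cite{CMT}. Finally, your detour through ``the general $g^2_{15}$ maps $C$ birationally onto a \emph{nodal} curve'' is both unproven and unnecessary: linear stability only requires that no fibre of $\phi_L$ have length $\ge 8$, i.e.\ exactly the statement about $W^1_6$ and $W^1_7$ above, and there is no reason to expect the degree-$15$ plane model (whose $\delta$-invariant is $91-15=76$) to be nodal. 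You correctly flag this as the delicate point, but the proposal as written does not close it.
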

\noindent This result gives a counterexample to Conjecture \ref{Conj3} on plane curves of degree $7$. This has the following consequence:
\vskip2mm
\begin{theorem}[(Corollary 5.3 of \cite{CMT}] Let $C$ be a smooth plane curve of degree 7, and let $B_C(r,d,n)$ be the moduli space of semistable vector bundles $E$ on $C$ with rank $r$, degree $d$, and such that $h^0(C,E)\geq n$. Then all components of $B_C(2,15,3)$ consist of nongenerated bundles.
\end{theorem}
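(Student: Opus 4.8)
The plan is to exploit the duality, coming from the double dual of (\ref{DefnDSB}), between generated coherent systems of type $(2,15,3)$ and base point free $g^2_{15}$'s on $C$, and then to feed in Theorem \ref{PlaneSeptic}. Note first that $C$ has genus $g = 15$ and canonical bundle $\Kc = \Oc(4)$. Suppose $E \in B_C(2,15,3)$ is globally generated with $h^0(C,E) = 3$, and put $V = H^0(C,E)$. Dualizing the defining sequence $0 \to \mev \to V \otimes \Oc \to E \to 0$ yields
\[ 0 \ \to \ E^\vee \ \to \ V^\vee \otimes \Oc \ \to \ L \ \to \ 0, \]
where $L := \mev^\vee$ is a globally generated line bundle of degree $15$ whose space of sections contains the $3$-dimensional subspace $V^\vee$. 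Since $E$ is semistable of positive slope we have $H^0(C, E^\vee) = 0$, so $V^\vee \hookrightarrow H^0(C,L)$ and $L \in W^2_{15}(C)$. Outside the proper subvariety $W^3_{15}(C)$ one has $h^0(C,L) = 3$, so that $L$ is a base point free $g^2_{15}$ with $E^\vee = M_L$, the dual span bundle of $(L, H^0(C,L))$. The assignment $E \mapsto L$ is injective, as $E = M_L^\vee$ is recovered from $L$, and $E$ is semistable if and only if $M_L$ is.

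The key point is then an openness argument. Semistability is an open condition in families, so over the locus of $L \in W^2_{15}(C) \setminus W^3_{15}(C)$, where the $M_L$ form a flat family, the set of $L$ with $M_L$ semistable is open. By Theorem \ref{PlaneSeptic}, a general $L \in W^2_{15}(C)$ is base point free with $M_L$ not semistable. On each irreducible component this exhibits the semistable locus as an open subset missing the generic point, hence empty. Therefore no such $L$ has $M_L$ semistable, and by the correspondence of the previous paragraph no globally generated $E \in B_C(2,15,3)$ with $h^0(C,E) = 3$ can exist.

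To conclude, I would note that global generation is an open condition on $B_C(2,15,3)$, so on any component the generated bundles form an open (possibly empty) subset whose general member, when the subset is nonempty, has $h^0 = 3$. The preceding paragraph rules these out, so the generated locus is empty on every component; that is, each component consists entirely of nongenerated bundles. The bundles with $h^0(C,E) \ge 4$ lie in $B_C(2,15,4)$, and must be treated separately: since the corresponding Brill--Noether number $\beta(2,15,4) = 4(g-1)+1 - 4\bigl(4 - 15 + 2(g-1)\bigr) = -11$ is negative, I expect this locus to be empty, or at least to contain no generated bundle, which I would verify by applying the analogous dual span construction to the higher-rank bundle $M_E$.

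The main obstacle is controlling the components of $W^2_{15}(C)$. Because the plane septic is a special curve, $W^2_{15}(C)$ may be reducible and need not have the expected dimension $\rho = g - 3(g-15+2) = 9$. The openness argument forces $M_L$ to be unstable for \emph{every} $L$ in any component whose general member has unstable $M_L$; I must therefore ensure that the conclusion of Theorem \ref{PlaneSeptic} is valid at the generic point of each component of $W^2_{15}(C) \setminus W^3_{15}(C)$, and not merely at a single generic point. A second, more routine, point is the analysis of the higher section locus $h^0(C,E) \ge 4$ together with the jumping of $h^0$ along $W^3_{15}(C)$, where the family $M_L$ ceases to be flat.
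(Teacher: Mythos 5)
Your overall strategy --- dualize the evaluation sequence to land in $W^2_{15}(C)$, then play openness of semistability of the syzygy bundle against Theorem \ref{PlaneSeptic} --- is exactly the paper's. The genuine gap is your treatment of the case $h^0(C,E) \ge 4$, which you defer to a separate argument that you do not carry out, supported only by the negativity of a Brill--Noether number. On a plane septic that heuristic proves nothing: the curve is Brill--Noether special (as you yourself observe when worrying about $\dim W^2_{15}(C)$), and indeed $W^5_{15}(C) \ne \emptyset$ since $\Oc(2)(p)$ gives a $g^5_{15}$, so negative $\rho$ cannot be used to kill $B_C(2,15,4)$ or its generated locus. The paper avoids the dichotomy on $h^0(C,E)$ altogether: if $E$ is generated of rank $2$, then a \emph{general} $3$-dimensional subspace $V \subseteq H^0(C,E)$ already generates $E$ (for each $p$ the subspaces failing to generate $E|_p$ form a codimension-$2$ Schubert locus in $\Gr(3, H^0(C,E))$, and $p$ varies in a curve). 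Running your duality with this $V$ in place of $H^0(C,E)$ gives $0 \to E^\vee \to V^\vee \otimes \Oc \to \det(E) \to 0$, so $\det(E) \in W^2_{15}(C)$ and $E^\vee = M_{V^\vee, \det(E)}$ is a semistable dual span bundle of a generated $g^2_{15}$, whether or not the series is complete; the same openness argument then applies. Replacing your opening hypothesis ``$h^0(C,E)=3$, $V = H^0(C,E)$'' by ``$V$ a $3$-dimensional generating subspace'' closes the gap and makes the second worry of your last paragraph moot.

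Your remaining concern --- that Theorem \ref{PlaneSeptic} must hold at the generic point of the particular component of $W^2_{15}(C)$ containing $\det(E)$, not merely at one general point --- is legitimate, but it is equally present in (and equally elided by) the paper's own proof, which likewise says only ``a general line bundle in that component''. So I would not count it against you; just be aware that both arguments lean on \cite[Theorem 4.1]{CMT} applying componentwise.
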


\vskip2mm

\begin{proof}
Suppose for a contradiction that $E \in B_C(2,15,3)$ were generated. Then there is a vector subspace $V \subset H^0 ( C, E )$ of dimension 3 that generates $E$. Then we have the following exact sequence
$$ 0 \ \to \ E^\vee \ \to \ V^\vee \otimes \Oc \ \to \ L \ \to \ 0 $$
where $L = \det (E)$ lies in a component of $W^2_{15}(C)$. As having semistable syzygy bundle is an open property, a general line bundle in that component would be base point free and have semistable syzygy bundle. But this is a contradiction to Theorem \ref{PlaneSeptic}.
\end{proof}

\noindent This concludes our overview of existing results relating linear stability to Butler's conjecture. In the next section we will give a construction showing that counterexamples similar to that studied in Theorem \ref{PlaneSeptic} exist over certain curves of any genus $g \ge 7$.

\section{Linearly stable systems with nonsemistable DSB over hyperelliptic curves} \label{Hyperell}

\noindent Let $C$ be a hyperelliptic curve of genus $g \ge 7$. Write $H$ for the hyperelliptic line bundle, and $| H | = g^1_2$ the hyperelliptic linear system. Then the canonical linear system $| \Kc |$ is $(g-1) \cdot g^1_2$. Let $n$ be an integer satisfying
\begin{equation} \label{NumericalHypothesis}
2 \ \le \ n \ \quad \text{ and } \quad 3n + 1 \ \leq \ g - 1.
\end{equation}
Then by \cite[Chap.\ I.2]{ACGH} we have $h^0 ( C, H^{2n+1} ) = 2n + 2$. If $V \subset H^0 ( C, H^{2n+1} )$ is a general subspace of dimension three, then we may assume that $V$ generates $H^{2n+1}$, and consider the dual span bundle
$$0 \ \to \ \mfv \ \to \ \Oc \otimes V \ \to \ H^{2n+1} \ \to \ 0 .$$
\vskip2mm
\noindent Note that $\mfv$ has rank $2$ and degree $-(4n+2)$, whence $\mu( \mfv ) = -2n-1$. Tensoring the above sequence by $H^n$ and taking cohomology, we obtain the multiplication map of sections
$$\mu_{V, H^{2n+1}} \colon V \otimes H^0(C, H^n ) \ \to \ H^0 (C, H^{3n+1}) .$$
Now $3n+1 \le g - 1$ by (\ref{NumericalHypothesis}). Thus as above we have
$$\dim (V) \cdot h^0 ( C, H^n ) \ = \ 3 ( n + 1 ) \quad \hbox{and} \quad h^0 (C, H^{3n+1}) = 3n + 2 .$$
Therefore, $\Ker ( \mu_{V, H^{2n+1}} ) = H^0(C,M_{V,H^{2n+1}} \otimes H^n)$ is nonzero, and there is a sheaf injection $H^{-n} \to \mfv$. It follows that $\mfv$ has a line subbundle of degree at least
$$ \mu(H^{-n}) \ = \ -2n \ > \ -2n - 1 \ = \ \mu (\mfv) . $$
In particular, it follows that
\begin{equation} \label{NotSemistable}
\hbox{the dual span bundle $\mfv$ is not semistable.}
\end{equation}

\noindent On the other hand, we have:

\begin{lemma} \label{HoweverLinSt}
The generated coherent system $(H^{2n+1} , V)$ over $C$ is linearly stable.
\end{lemma}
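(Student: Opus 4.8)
The plan is to exploit that $\dim(V)=3$, so that by Definition \ref{LinStRankOne} linear stability of $(H^{2n+1},V)$ need only be tested against $2$-dimensional subspaces $W\subset V$. Writing $L=H^{2n+1}$, so that $\deg(L)=4n+2$, the stability inequality $\deg(L_W)/(w-1)>\deg(L)/(\dim(V)-1)$ becomes, for $w=2$, simply $\deg(L_W)>2n+1$. Thus I would first reformulate the statement as follows: $(H^{2n+1},V)$ is linearly stable if and only if every pencil $W\subset V$ has base divisor $D$ of degree at most $2n$, equivalently $L_W=H^{2n+1}(-D)$ has degree at least $2n+2$.

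Next I would classify the pencils that could violate this. Suppose $W\subset V$ is a pencil with $\deg(L_W)\le 2n+1$. After removing its base locus, $W$ induces a base point free pencil, hence a morphism $C\to\PP^1$ of degree $\deg(L_W)\le 2n+1$. The numerical hypothesis (\ref{NumericalHypothesis}) gives $2n+1<g$, so by the Castelnuovo--Severi inequality this morphism must factor through the hyperelliptic map $C\to\PP^1$. Consequently $L_W=H^m$ with $\deg(L_W)=2m\le 2n$, so $1\le m\le n$; the base divisor satisfies $D\in|H^{2n+1-m}|$; and $W=s_D\cdot W'$, where $s_D$ cuts out $D$ and $W'\subset H^0(C,H^m)$ is a base point free pencil. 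In particular the borderline value $\deg(L_W)=2n+1$, being odd, cannot occur at all.

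With this classification in hand, I would conclude by a dimension count on $\Gr(3,2n+2)=\Gr(3,H^0(C,H^{2n+1}))$. For each fixed $m$ with $1\le m\le n$, the destabilizing data $(D,W')$ range over $|H^{2n+1-m}|\cong\PP^{2n+1-m}$ and an open subset of $\Gr(2,m+1)$, giving a family of pencils of dimension $(2n+1-m)+2(m-1)=2n-1+m$; and the locus of $V\in\Gr(3,2n+2)$ containing a given such pencil $W$ has dimension $2n-1$. Hence the subvariety of $\Gr(3,2n+2)$ carrying some destabilizing pencil has dimension at most $(2n-1+m)+(2n-1)\le 5n-2$, which is strictly smaller than $\dim\Gr(3,2n+2)=6n-3$ precisely because $n\ge 2$. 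Therefore a general $V$ contains no destabilizing pencil, so every $2$-dimensional $W\subset V$ satisfies $\deg(L_W)\ge 2n+2>2n+1$, and $(H^{2n+1},V)$ is linearly stable.

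The main obstacle is the classification step: pinning down that every pencil of degree at most $2n+1$ on $C$ is compounded of the hyperelliptic pencil. This is where the Castelnuovo--Severi inequality and the bound $2n+1<g$ are essential, and it is what forces $\deg(L_W)$ to be even and hence $m\le n$. Once this is in place, the remainder is a comparison of dimensions, the only delicate point being that the strict inequality $5n-2<6n-3$ (equivalently $n>1$) relies on the hypothesis $n\ge 2$; the supporting facts ($h^0(C,H^{2n+1})=2n+2$, $h^0(C,H^{2n+1-m})=2n+2-m$, and the identification $H^0(C,H^m)\cong H^0(\PP^1,\cO_{\PP^1}(m))$) are routine consequences of the hyperelliptic structure recalled above.
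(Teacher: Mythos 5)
Your argument is correct, but it takes a genuinely different route from the paper's. The paper observes that $\pi^* \colon H^0 ( \PP^1 , \opo ( 2n+1 ) ) \to H^0 ( C, H^{2n+1} )$ is an isomorphism (this is where $3n+1 \le g-1$ enters), so that $V = \pi^* \bV$ for a uniquely determined $\bV$; it then invokes \cite[Proposition 8.2]{mistrettastoppino} for linear stability of $( \opo (2n+1) , \bV )$ with $\bV$ general and $2n+1 \ge 5$, and concludes via the pullback lemma (Lemma \ref{PullbackLinSt}, or \cite[Remark 7.8]{mistrettastoppino}). You instead work directly on $C$: you classify the potentially destabilizing pencils by Castelnuovo--Severi (every base point free pencil of degree at most $2n+1 < g$ on the hyperelliptic curve is composed with $\pi$, forcing $L_W = H^m$ with $m \le n$ and in particular ruling out the odd borderline degree $2n+1$), and then eliminate them by a dimension count in $\Gr ( 3, 2n+2 )$, where $5n-2 < 6n-3$ precisely because $n \ge 2$. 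Your version is self-contained modulo Castelnuovo--Severi and makes visible exactly where each numerical hypothesis in (\ref{NumericalHypothesis}) is used, whereas the paper's is shorter and reuses machinery (the pullback lemma) needed elsewhere in the article. The two are ultimately cousins --- the dimension count you perform on $C$ corresponds, under the isomorphism $\pi^*$ on global sections, to the one Mistretta--Stoppino perform on $\PP^1$ --- but your classification of bad pencils replaces the paper's identification of $V$ itself as a pullback, and either argument stands on its own.
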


\begin{proof}
As $C$ is hyperelliptic, the canonical map $C \to |\Kc|^\vee$ factorizes via the double covering $\pi \colon C \to |H|^\vee$. In particular, every section of $\Kc$ is invariant under the hyperelliptic involution, and the natural injection $\pi^* \colon H^0 ( \PP^1 , \opo ( g - 1 ) ) \hookrightarrow H^0 ( C, \Kc )$ is an isomorphism.
\vskip2mm

\noindent As $H^{2n+1} = \pi^* \opo (2n+1)$, by (\ref{NumericalHypothesis}) we may choose a sheaf injection $H^{2n+1} \to \Kc$, and then $\pi^* \colon H^0 ( \PP^1 , \opo ( 2n+1 ) ) \to H^0 ( C, H^{2n+1} )$ is also an isomorphism. Therefore, $V = \pi^* \bV$ for a uniquely determined $\bV \subseteq H^0 ( \PP^1 , \opo (2n+1) )$, and $( H^{2n+1} , V ) = \left( \pi^* \opo(2n+1), \pi^* \bV \right)$.
\vskip2mm

\noindent Now since $2n + 1 \ge 5$ by (\ref{NumericalHypothesis}) and $V$ is general in $\Gr ( 3 , H^0 ( C, H^{2n+1} )$, by \cite[Proposition 8.2]{mistrettastoppino} the image $\PP^1$ in $\PP V^\vee$ is linearly stable; that is, $( \opo ( 2n+1 ) , \bV )$ is a linearly stable generated coherent system over $\PP^1$. By \cite[Remark 7.8]{mistrettastoppino} or Lemma \ref{PullbackLinSt} below, $( H^{2n+1} , V )$ is linearly stable.
\end{proof}

\noindent Summing up, by (\ref{NotSemistable}) and Lemma \ref{HoweverLinSt} we have:

\begin{theorem} \label{HyperellipticCounterexample} Let $C$ be any hyperelliptic curve of genus $g \ge 7$. Then for each $n$ satisfying (\ref{NumericalHypothesis}) there exists a linearly stable coherent system of type $( 1 , 4n+2 , 2 )$ over $C$ whose associated dual span bundle is desemistabilized by a subsheaf of the form $H^{\otimes -n}$. \end{theorem}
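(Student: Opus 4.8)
The plan is to take as witness the generated coherent system $( H^{2n+1}, V )$ constructed above, with $V \subseteq H^0( C, H^{2n+1} )$ a general three-dimensional subspace, and to observe that the statement is simply the synthesis of the two facts already assembled before it: the linear stability of $( H^{2n+1}, V )$ provided by Lemma \ref{HoweverLinSt}, and the failure of semistability recorded in (\ref{NotSemistable}). Since $\deg H = 2$, the line bundle $H^{2n+1}$ has rank one and degree $4n+2$, so $\mu( \mfv ) = -2n-1$; thus the only point to add beyond citing the two inputs is to name the destabilizing subsheaf explicitly as $H^{\otimes -n}$.

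For the linear stability I would appeal directly to Lemma \ref{HoweverLinSt}. Its content is that on a hyperelliptic curve the pair factors through the double cover $\pi \colon C \to \PP^1$ as $( H^{2n+1}, V ) = ( \pi^* \opo ( 2n+1 ), \pi^* \bV )$ for a general three-dimensional $\bV \subseteq H^0( \PP^1, \opo ( 2n+1 ) )$; linear stability of the image of $\PP^1$ in $\PP \bV^\vee$ holds by \cite[Proposition 8.2]{mistrettastoppino} once $2n+1 \ge 5$, which is guaranteed by (\ref{NumericalHypothesis}), and linear stability is preserved under pullback by Lemma \ref{PullbackLinSt}. So this property requires no fresh argument.

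For the desemistabilization I would reproduce the section count behind (\ref{NotSemistable}). Tensoring the defining sequence of $\mfv$ by $H^n$ and passing to cohomology identifies $H^0( C, \mfv \otimes H^n )$ with the kernel of the multiplication map $\mu_{V, H^{2n+1}} \colon V \otimes H^0( C, H^n ) \to H^0( C, H^{3n+1} )$. Using the hyperelliptic dimension formula $h^0( C, H^m ) = m+1$, valid in the range $m \le g-1$ secured by (\ref{NumericalHypothesis}), the source has dimension $3(n+1)$ and the target dimension $3n+2$, so the kernel is nonzero. A nonzero element is a nonzero map $H^{-n} \to \mfv$, hence a sheaf injection, and since $\mu( H^{\otimes -n} ) = -2n > -2n-1 = \mu( \mfv )$ this subsheaf desemistabilizes $\mfv$, as claimed.

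There is no genuine obstacle at the level of the theorem itself: it is a packaging of results already in hand, and the substantive input is the linear-stability Lemma \ref{HoweverLinSt}, which rests on the general-position result over $\PP^1$ and the pullback principle. Within the present assembly the one point I would check carefully is the cohomology estimate $h^0( C, H^m ) = m+1$ for $m \in \{ n, 2n+1, 3n+1 \}$, which is exactly what the numerical hypothesis (\ref{NumericalHypothesis}) is designed to guarantee; once that holds, the strict inequality $-2n > -2n-1$, and hence the failure of semistability, are immediate.
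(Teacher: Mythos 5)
Your proposal is correct and follows exactly the paper's route: the theorem is stated as a summary of the displayed fact (\ref{NotSemistable}) (obtained from the multiplication-map section count giving a nonzero map $H^{-n} \to \mfv$ of slope $-2n > -2n-1 = \mu(\mfv)$) together with Lemma \ref{HoweverLinSt} (linear stability via pullback from $\PP^1$). Your verification of the hyperelliptic dimension count $h^0(C, H^m) = m+1$ for $m \le g-1$ matches the paper's use of \cite[Chap.\ I.2]{ACGH} under hypothesis (\ref{NumericalHypothesis}).
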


\begin{remark}
As mentioned in the previous section, in \cite[Theorem 4.1]{CMT} a similar construction gives a counterexample to the implication ``$(L, V)$ linearly stable implies $\mlv$ slope stable'' for a special curve of genus $15$. Theorem \ref{HyperellipticCounterexample} shows that this kind of situation is not an isolated phenomenon, but can arise over special curves of any genus $g \ge 7$.
\end{remark}

\subsection{Questions} \label{QuestionsRankOne}

The final section of \cite{CMT} contains a list of interesting open questions related to linear stability and its relation to Butler's conjecture and Brill--Noether theory. We mention some other interrelated open questions.

\begin{question} Find further examples of linearly (semi)stable $(L, V)$ for which $\mlv$ is not (semi)stable. \end{question}

\begin{question} Find necessary and sufficient conditions under which linear stability of $(L, V)$ is equivalent to slope stability of $\mlv$.  In particular, in both \cite[Theorem 4.1]{CMT} and Theorem \ref{HyperellipticCounterexample} the curve is Brill--Noether special. What can be said for general curves? \end{question}

\section{Linear stability for higher rank coherent systems}

\noindent For the remainder of the article, we discuss the notion of linear (semi)stability in the context of higher rank bundles over the curve $C$. A priori, linear stability is a property of a variety $X$ equipped with a morphism $\psi \colon X \to \PP^{n-1}$ (cf.\ Definition \ref{MumfordDefn}). When $X$ is the curve $C$, we have used the equivalent Definition \ref{LinStRankOne} from \cite{CMT}, characterizing linear stability in terms of the cohomology of the linear system $(L, V)$ over $C$. Taking a cue from this, we shall define linear stability in the present work directly as a property of generated coherent systems. This turns out to be convenient for applications to Butler's conjecture (but see Remark \ref{OtherDefn}).

\subsection{Defining linear stability in higher rank}

\begin{definition} \label{LStHigherRank}
Let $(E, V)$ be a generated coherent system of type $(r, d, n)$ over $C$. We say that $(E, V)$ is \textsl{linearly semistable} if for all subspaces $W \subset V$ such that $E_W \not\cong W \otimes \Oc$, we have
\begin{equation} \label{LStIneq} 
\frac{\deg (E_W)}{\dim (W) - \rank (E_W)} \ \ge \ \frac{d}{n - r} ,
\end{equation}
where $E_W$ is the subsheaf generated by $W$ as in (\ref{EW}). If inequality is strict for all such $W$, we say that $(E, V)$ is \textsl{linearly stable}.
\end{definition}

\noindent The following equivalent formulation will be useful.

\begin{lemma} \label{LStHigherRankEquiv} 
Let $(E, V)$ be a generated linear system of type $(r, d, n)$ over $C$. Then $(E, V)$ is linearly semistable if and only if for all generated coherent subsystems $(F, W)$ of $(E, V)$ where $\deg (F) > 0$, we have
\begin{equation} \label{LStIneqEquiv}
\frac{\deg (F)}{\dim (W) - \rank (F)} \ \ge \ \frac{d}{n - r} .
\end{equation}
Moreover, $(E, V)$ is linearly stable if and only if inequality is strict for all such $(F, W)$.
\end{lemma}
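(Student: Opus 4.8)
The plan is to show that Definition \ref{LStHigherRank} and the condition in Lemma \ref{LStHigherRankEquiv} are equivalent by relating an arbitrary generating subspace $W \subset V$ to a genuine coherent subsystem of $(E, V)$. The key observation is that for any $W \subseteq V$, the subsheaf $E_W$ generated by $W$ is a subbundle of $E$ (locally free, as noted after (\ref{EW})), and $(E_W, W)$ is a generated coherent subsystem of $(E, V)$ by construction: $W$ generates $E_W$ by definition, and $W \subseteq V \cap H^0(C, E_W)$. So every $W$ appearing in Definition \ref{LStHigherRank} produces a candidate $(F, W)$ with $F = E_W$ for the condition in the Lemma. Conversely, given any generated coherent subsystem $(F, W)$, generatedness means precisely that $E_W = F$, so the two quantities $\deg(E_W)/(\dim(W) - \rank(E_W))$ and $\deg(F)/(\dim(W) - \rank(F))$ coincide. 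This establishes a bijection-like correspondence between the two families of test objects.

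The remaining work is to reconcile the two hypotheses governing which $W$ (resp.\ $(F,W)$) are tested: Definition \ref{LStHigherRank} ranges over $W$ with $E_W \not\cong W \otimes \Oc$, while the Lemma ranges over generated subsystems with $\deg(F) > 0$. First I would record that the denominator $\dim(W) - \rank(E_W)$ is always nonnegative, being the rank of the DSB $M_{W, E_W}$ from the sequence $0 \to M_{W,E_W} \to W \otimes \Oc \to E_W \to 0$; it vanishes exactly when $E_W \cong W \otimes \Oc$, i.e.\ when that DSB is zero. Thus the condition $E_W \not\cong W \otimes \Oc$ is what guarantees a strictly positive denominator so that the inequality (\ref{LStIneq}) makes sense. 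The main point to verify is that $E_W \cong W \otimes \Oc$ forces $\deg(E_W) = 0$ and, conversely, that among generated subsystems the degenerate ones are precisely those with $\deg(F) = 0$. Indeed if $E_W \cong W \otimes \Oc$ then $\deg(E_W) = 0$; and a generated subsheaf $F = E_W$ of degree $0$ is globally generated of degree zero, hence trivial, so $F \cong W' \otimes \Oc$ for the generating space — making the two exclusion conditions match up. This lets me drop the degree-zero / trivial cases from both descriptions simultaneously.

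With the correspondence and the matching exclusions in hand, I would finish by arguing that restricting the test to subsystems with $\deg(F) > 0$ loses nothing. A generated subsheaf has $\deg(F) \geq 0$ since it is globally generated, so the only excluded case on the Lemma side is $\deg(F) = 0$, which as above means $F$ is trivial and contributes no constraint (the corresponding $W$ is excluded in the Definition too). Hence the two universally quantified statements test the same inequalities over the same effective range of $W$, giving the semistable equivalence. The stable case follows verbatim by replacing ``$\geq$'' with ``$>$'' throughout, since strictness is preserved termwise under the identification $\deg(E_W)/(\dim(W)-\rank(E_W)) = \deg(F)/(\dim(W)-\rank(F))$.

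The step I expect to be the main obstacle is the careful bookkeeping of the degenerate case: one must confirm that $E_W \cong W \otimes \Oc$, $\deg(E_W) = 0$, and ``$(E_W, W)$ contributes no binding inequality'' are genuinely interchangeable, and in particular that a generated subsheaf of degree $0$ must be a trivial bundle of the expected rank rather than some twisted globally generated sheaf. This relies on the fact that a globally generated vector bundle on a curve has nonnegative degree with equality iff it is trivial, so I would state that cleanly and invoke it to align the two exclusion conditions without circularity.
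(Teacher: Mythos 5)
Your argument is correct and follows essentially the same route as the paper's proof: both rest on identifying a generated subsystem $(F,W)$ with the pair $(E_W, W)$, together with the equivalence of the exclusion conditions ``$E_W \not\cong W \otimes \Oc$'' and ``$\deg(F) > 0$'' (which the paper uses implicitly and you justify via the fact that a globally generated bundle of degree zero is trivial). The only nitpick is that $E_W$ is a locally free subsheaf but not necessarily a saturated subbundle of $E$; this does not affect the argument.
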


\begin{proof}
Suppose that $(E, V)$ is linearly semistable, and let $(F, W)$ be a generated subsystem with $\deg (F) > 0$. As $W \subseteq H^0 ( C, F)$ and $( F, W )$ is generated,
\[
F \ = \ \Image \left( W \otimes \Oc \ \to \ E \right) \ = \ E_W
\]
by definition (\ref{EW}). As $\deg (F) > 0$, moreover, $F$ is not a trivial bundle. Then (\ref{LStIneq}) implies the desired inequality (\ref{LStIneqEquiv}).

\noindent For the converse: Suppose $W \subset V$ is such that $E_W$ is not trivial. Then $( E_W, W )$ is a generated subsystem of positive degree in $E$, and (\ref{LStIneqEquiv}) implies (\ref{LStIneq}) as desired.
\end{proof}

\begin{remark} \label{InitialRemarks}
\quad
\begin{enumerate}
\renewcommand{\labelenumi}{(\alph{enumi})}
\item Note that in contrast to the definition \ref{alphaSt} of $\alpha$-stability of $(E, V)$, the condition (\ref{LStIneqEquiv}) is nontrivial when $F$ is a full rank subsheaf (elementary transformation) of $E$. In Definition \ref{LinStRankOne}, in fact every subsheaf considered is of this form.
\item As in the rank one case, clearly it is sufficient to require (\ref{LStIneqEquiv}) for \emph{complete} linear subsystems (cf.\ {\S} \ref{BackgroundCohSys}).
\item For $r = 1$, the condition $\dim (W) \ge 2$ in Definition \ref{LinStRankOne} implies that $\deg (L_W) > 0$. However, for $r \ge 2$ this no longer holds: any generated $(E, V)$ admits generated subsystems of the form $\left( \Oc^{\oplus m} , H^0 ( C, \Oc )^{\oplus m} \right)$ for $2 \le m \le r$. If (\ref{LStIneqEquiv}) is written
\[
(n - r) \cdot \deg F \ \ge \ d \cdot ( \dim W - \rank F )
\]
then any such trivial subsystem would prevent $( E, V )$ from being linearly stable. This is why we require (\ref{LStIneq}) only for subspaces $W$ generating subsheaves of positive degree; equivalently, subsheaves of rank strictly less than $\dim (W)$. See also Remark \ref{NonzeroAnalogy}.
\end{enumerate}
\end{remark}

\noindent As before, we wish to study the relation between linear (semi)stability of $(E, V)$ and slope (semi)stability of the dual span bundle $\mev$ defined in (\ref{DefnDSB}). The key to this is the following generalization of Remark \ref{SlopeEquivRankOne}. Note firstly that if $(F, W) \subset (E, V)$ is an inclusion of generated coherent systems (where possibly $\rank (F) = \rank (E)$), there is a Butler diagram
\[
\xymatrix{ 0 \ar[r] & \mfw \ar[d] \ar[r] & \Oc \otimes W \ar[d] \ar[r] & F \ar[d] \ar[r] & 0 \\
 0 \ar[r] & \mev \ar[r] & \Oc \otimes V \ar[r] & E \ar[r] & 0 . }
\]

\begin{lemma} \label{SlopeCrit}
Suppose that $(E, V)$ is a generated coherent system of type $(r, d, n)$ over $C$. Then $(E, V)$ is linearly semistable if and only if the slope inequality $\mu ( \mfw ) \le \mu ( \mev )$ is satisfied for all generated coherent subsystems $(F, W)$ of $(E, V)$ where $\deg (F) > 0$ and $W \subseteq V \cap H^0 ( C, F )$ generates $F$. The system $(E, V)$ is linearly stable if and only if inequality is strict for all such $(F, W)$.
\end{lemma}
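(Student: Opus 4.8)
The plan is to reduce the claimed equivalence to the purely numerical criterion already established in Lemma \ref{LStHigherRankEquiv}, using nothing more than the two rows of the displayed Butler diagram to compute ranks and degrees. The whole statement should come out as a one-line sign manipulation once the invariants of $\mfw$ and $\mev$ are read off, so the work is bookkeeping rather than genuine difficulty.

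Concretely, I would first fix a generated coherent subsystem $(F, W)$ of $(E, V)$ with $\deg (F) > 0$ and $W$ generating $F$, and record the invariants coming from the two short exact sequences in the Butler diagram. Since $W \otimes \Oc$ is trivial of rank $\dim (W)$ and the maps onto $F$ (resp.\ onto $E$) are surjective, the top row gives $\rank (\mfw) = \dim (W) - \rank (F)$ and $\deg (\mfw) = - \deg (F)$, while the bottom row gives $\rank (\mev) = n - r$ and $\deg (\mev) = -d$. Hence
\[
\mu ( \mfw ) \ = \ \frac{- \deg (F)}{\dim (W) - \rank (F)} \qquad \text{and} \qquad \mu ( \mev ) \ = \ \frac{-d}{n - r} .
\]
Next I would observe that the hypothesis $\deg (F) > 0$ forces $\dim (W) > \rank (F)$: if these were equal, then $\mfw$ would be a rank-zero subsheaf of the locally free sheaf $W \otimes \Oc$, hence zero, making $W \otimes \Oc \cong F$ and $\deg (F) = 0$, a contradiction. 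Thus $\rank (\mfw) \ge 1$ and both slopes above are genuine quotients with strictly positive denominators, so $\mu(\mfw)$ is well defined.

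With both denominators positive, the inequality $\mu ( \mfw ) \le \mu ( \mev )$ is equivalent, after multiplying through by $-1$, to
\[
\frac{\deg (F)}{\dim (W) - \rank (F)} \ \ge \ \frac{d}{n - r} ,
\]
which is exactly the inequality (\ref{LStIneqEquiv}) of Lemma \ref{LStHigherRankEquiv}. Since the family of subsystems quantified over in the present lemma coincides with that in Lemma \ref{LStHigherRankEquiv} (generated $(F, W)$ with $\deg (F) > 0$), the slope inequality holds for all of them precisely when $(E, V)$ is linearly semistable. The stable case follows by the identical computation, replacing every ``$\ge$'' and ``$\le$'' by its strict counterpart, which is legitimate because the sign flip preserves strictness. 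The only point demanding any care is the positivity of $\dim (W) - \rank (F)$ established above; without it the multiplication by $-1$ would not reverse the inequality correctly, and it is precisely the restriction to $\deg (F) > 0$ in the hypotheses that guarantees it, so I would make sure to flag that step explicitly rather than treating it as automatic.
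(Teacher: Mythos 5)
Your proposal is correct and follows essentially the same route as the paper: the paper's (one-line) proof simply observes that $\frac{\deg(F)}{\dim(W)-\rank(F)} = \mu(\mfw^\vee)$ and invokes Lemma \ref{LStHigherRankEquiv}, which is exactly your computation of $\deg(\mfw)$ and $\rank(\mfw)$ from the Butler diagram followed by the sign flip. Your explicit verification that $\deg(F)>0$ forces $\dim(W)>\rank(F)$ is a detail the paper leaves implicit, and it is a worthwhile addition.
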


\begin{proof}
For any generated subsystem $(F, W)$ of $(E, V)$ with $\deg (F) > 0$, we have
\[
\frac{\deg ( F )}{\dim ( W ) - \rank ( F )} \ = \ \mu ( \mfw^\vee ) .
\]
Thus the statement follows from Lemma \ref{LStHigherRankEquiv}. \end{proof}

\begin{remark} \label{NonzeroAnalogy}
We observe that the dual span bundle of a coherent system of the form $\left( \Oc \otimes \C^w , H^0 ( C, \Oc \otimes \C^w ) \right)$ is zero. Thus restricting to subsystems of positive degree in Definition \ref{LStHigherRank} and Lemma \ref{LStHigherRankEquiv} is analogous to requiring the slope inequality only for \emph{nonzero} dual span subbundles of $\mev$ in Lemma \ref{SlopeCrit}.
\end{remark}

\noindent In particular, as in the case $r = 1$ discussed earlier, if $\mev$ is slope (semi)stable then $(E, V)$ is linearly (semi)stable. In what follows, we shall discuss the reverse implication for $r \ge 2$.

\subsection{Some examples}

\noindent In \cite{bmno}, the relation between $\alpha$-stability of a generated $( E, V )$ (Definition \ref{alphaSt}) and slope stability of the corresponding $\mev$ is studied for $r \ge 2$ and $\alpha$ close to zero. In particular, several situations are considered (sometimes implicitly) where a coherent system $( E, V )$ has nonsemistable $\mev$. We shall use Lemma \ref{SlopeCrit} in the following cases to show the slightly stronger statement that $( E, V )$ is not linearly semistable.

 \vspace{.25cm}
 
\noindent Notice that neither Definition \ref{LStHigherRank} nor Definition \ref{alphaSt} requires that the ambient bundle $E$ be semistable. The role of semistability of $E$ is discussed below.

\begin{example} \label{ExaOne}
We recall the construction of \cite[Example 3.7]{bmno}. Set $g = 2$ and let $r$ and $a$ be positive integers. Let $d$ be an integer satisfying $3n + 2a < d < 4n + 2a$, and set $m = d - 2n - a$. Let $V \subseteq H^0 ( C, E )$ be a subspace of dimension $r + m$. By \cite[Theorem 3.3]{bmno}, there exists a generated coherent system $(E, V)$ of type $(r, d, r+m)$ which is $\alpha$-stable for $\alpha$ close to zero. Then $\mev$ has slope $-\frac{d}{m} = -\frac{d}{d - 2r - 2a} < -2$. The contrapositive of the argument of \cite{bmno} shows that $h^0 ( C , \Hom ( \Kc^\vee , \mev ) ) \ne 0$, so $\Kc^\vee$ is a destabilizing subsheaf of $\mev$.

\noindent Let us now show that $(E, V)$ is not linearly semistable. Firstly, we claim that $\Kc^\vee$ is saturated in $\mev$. Let $\Kc^\vee ( D )$ be the saturation, where $D$ is an effective divisor. Since $\mev^\vee$ is generated by $V^\vee$, so is the quotient bundle $\Kc ( - D )$. Hence $D$ has degree $0$, $1$ or $2$. If $2$ then $\Kc (-D)$ is trivial. But then $\Oc$ is a subbundle of $\mev$, which is impossible since $h^0 ( C , \mev ) = 0$. 
 If $\deg D = 1$, then $\Kc ( - D )$ is a generated degree $1$ line bundle, which is impossible since $g = 2$. Thus $D = 0$ and $\Kc^\vee$ is a subbundle of $\mev$. Moreover, the image of the map $V^\vee \to \mev^\vee \to \Kc$ is isomorphic to $H^0 ( C, \Kc )$. It follows that there exists a diagram of vector bundles
\[
\xymatrix{ \Kc^\vee \ar[r]^-\varepsilon \ar[d] & H^0 ( C ,  \Kc )^\vee \otimes \Oc \ar[r] \ar[d] & \Kc \ar[d] \\
 \mev \ar[r] & V \otimes \Oc \ar[r] & E . }
\]
(Note that $\varepsilon$ is identified with the transposed evaluation map of $\Kc$. The fact that the quotient is isomorphic to $\Kc$ is then a consequence of the fact that $g = 2$.) Therefore, $\Kc$ is a subbundle of $E$ which is generated by $H^0 ( C, \Kc ) \cap V$. As $g = 2$, we have
\[
\frac{\deg ( \Kc )}{\dim ( V \cap H^0 ( C, \Kc ) ) - \rank ( \Kc )} \ = \ \frac{2}{2 - 1} \ = \ 2 .
\]
On the other hand,
\[
\frac{\deg E}{\dim V - r} \ = \ \frac{d}{m} \ = \ \frac{d}{d-2r-a} .
\]
As by hypothesis $d < 4r + 2a$, we have $2d - 4r - 2a < d$, whence $\frac{d}{d-2r-a} > 2$. Thus by Lemma \ref{SlopeCrit}, the system $(E, V)$ is not linearly semistable. \end{example}

\begin{remark}
We note in passing that in another situation, \cite[Theorem 1.2]{but}, a condition for stability of $M_E$ is that $\Kc$ should not inject into $E$. The role of the property $\Kc \subset E$ remains to be fully explored.
\end{remark}

\noindent As the coherent system $(E, V)$ of Example \ref{ExaOne} is $\alpha$-stable for $\alpha$ close to zero, in particular $E$ is a semistable vector bundle (cf.\ {\S} \ref{BackgroundCohSys}). However, the above argument does not directly use the semistability of $E$. In contrast, we now discuss two other situations from \cite{bmno} where the stability properties of $E$ are significant. The next example is taken from the proof of \cite[Theorem 3.11 (i)]{bmno}.

\begin{example} \label{ExaTwo}
Suppose $d > 2rg - r$ and that $E$ is a strictly semistable bundle of rank $r$ and degree $d$. Let $E' \subset E$ be a subbundle of rank $r'$ and degree $d'$ with $\frac{d'}{r'} = \frac{d}{r}$. Then $E'$ is also semistable, and $h^1 ( C, E ) = h^1 ( C, E' ) = 0$. Thus, using Riemann--Roch, we have
\[
\frac{d'}{h^0 ( C, E' ) - r'} \ = \ \frac{d'}{d' - r'(g-1) - r'} \ = \ \frac{d'/r'}{d'/r' - g} \ = \ \frac{d/r}{d/r - g} \ = \ \frac{d}{d - rg} \ = \ \frac{\deg (E)}{h^0 ( C, E) - r} .
\]
Thus, by Lemma \ref{SlopeCrit}, at best $(E, V)$ is linearly strictly semistable.
\end{example}

\noindent The next example is adapted from the proof of \cite[Lemma 4.4]{bmno}.

\begin{example} \label{ExaThree}
Let $(E, V)$ be a generated coherent system of type $(r, d, n)$ where $E$ is semistable. Suppose that $(G, W)$ is a generated subsystem of type $(s, e, m)$ with $e > 0$, and thus also $m > s$. Now suppose that $\frac{s}{m-s} \le \frac{r}{n-r}$; equivalently,
\begin{equation} \label{one}
\frac{s}{r(m - s)} \ \le \ \frac{1}{n - r} .
\end{equation}
As $E$ is semistable, we have
\begin{equation} \label{two}
e \ \le \ \frac{ds}{r} .
\end{equation}
Thus we obtain
\[
\mu ( M_{G, W}^\vee ) \ = \ \frac{e}{m - s} \ \le \ \frac{ds}{r(m-s)} \le \ \frac{d}{n-r} \ = \ \mu ( \mev^\vee ) ,
\]
where the first and second inequalities follow from (\ref{two}) and (\ref{one}) respectively. By Lemma \ref{SlopeCrit}, the system $(E, V)$ is at best linearly semistable. If either (\ref{one}) or (\ref{two}) is strict (for example, if $E$ is actually stable), then $\mev$ is not semistable.
\end{example}

\begin{question}
A similar argument shows that the coherent system $(E, V)$ in the proof of \cite[Theorem 4.1]{bmno}, which is $\alpha$-nonstable for some $\alpha$, also fails to be linearly semistable. In view of this and Remark \ref{InitialRemarks} (a), the connection between $\alpha$-stability and linear stability would be natural to investigate.
\end{question}

\noindent In light of the above examples, in the context of Butler's conjecture it is natural to ask in the case $r \ge 2$ for conditions under which linear (semi)stability of $(E, V)$ implies slope (semi)stability of $\mev$. It is also of interest give counterexamples such as those in \cite[{\S} 8]{mistrettastoppino} and \cite{CMT} where the implication does not hold. We shall now offer results in both of these directions.

\subsection{A sufficient condition for stability of rank two DSBs}

Let $C$ be a curve. Before proceeding, we recall from \cite[{\S} 4]{langenewstead} that the \textsl{gonality sequence} $d_1 , d_2 , \ldots$ of $C$ is defined by
\[
d_k \ := \ \min \{ \deg (L) : \hbox{ $L$ a line bundle over $C$ with } h^0 ( C, L ) \ge k + 1 \} .
\]

\noindent Now we can give a straightforward condition on a coherent system of type $(2, d, 4)$ under which linear stability implies slope stability of the dual span bundle.

\begin{proposition} \label{Sufficient2d4}
Let $(E, V)$ be a generated coherent system of type $(2, d, 4)$. If $(E, V)$ is linearly stable and $d < 2 \cdot d_3$, then $\mev$ is stable.
\end{proposition}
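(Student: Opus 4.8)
The plan is to argue by contradiction: assume $\mev$ is not stable and derive a contradiction from either linear stability or the numerical hypothesis $d < 2 d_3$. Since $\mev$ has rank $2$ and slope $-d/2$, non-stability means there is a line subbundle $S \subset \mev$ with $\deg(S) \geq -d/2$, and I would take $S$ of maximal degree. A preliminary observation is that $h^0(C, \mev) = 0$ (a global section would be a linear relation among a basis of $V$); since $\mev \subseteq V \otimes \Oc \cong \Oc^{\oplus 4}$ is semistable of slope $0$, any line subbundle has nonpositive degree, and degree $0$ would give a trivial sub-line-bundle, contradicting $h^0(C,\mev)=0$. Hence $-d/2 \leq \deg(S) < 0$.

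Next I would feed $S$ into the Butler diagram of $(E,V)$ by $S$, obtaining $W \subseteq V$, a bundle $F_S$ generated by $W$ with $S = \Ker(W \otimes \Oc \to F_S)$, and a nonzero map $\alpha \colon F_S \to E$ whose image is exactly $E_W$. Writing $K = \Ker(\alpha)$, the snake lemma applied to the two rows gives an exact sequence $0 \to K \to \mev/S \to (V/W) \otimes \Oc \to \operatorname{coker}(\alpha) \to 0$; as $\mev/S$ is a line bundle this forces $\rank(K) \le 1$, so $\rank(E_W) = \dim(W) - 1 - \rank(K)$ with $\rank(K) \in \{0,1\}$. The degree bookkeeping $\deg(F_S) = -\deg(S)$ and $\deg(E_W) = \deg(F_S) - \deg(K)$ will be used throughout, and the proof splits on $\dim(W)$.

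The case $\dim(W) = 4$ is where the gonality hypothesis enters, and I expect the remaining cases to be comparatively mechanical. Here $W = V$, and since $\mev^\vee$ is generated by $V^\vee$, the image of $V^\vee$ in $H^0(C, S^\vee)$ has dimension $\dim(W) = 4$; hence $h^0(C, S^\vee) \geq 4$, so $\deg(S^\vee) \geq d_3$ by definition of the gonality sequence. Combined with $d < 2d_3$ this gives $\deg(S) \leq -d_3 < -d/2$, contradicting that $S$ destabilizes. For $\dim(W) \leq 3$ I would instead invoke linear stability through Lemma \ref{SlopeCrit}. If $\alpha$ is injective then $F_S \cong E_W$, so $(E_W, W)$ is a genuine generated subsystem with $\rank(E_W) = \dim(W) - 1$ and $\deg(E_W) = -\deg(S) > 0$; linear stability yields $\deg(E_W)/(\dim(W) - \rank(E_W)) = -\deg(S) > d/2$, i.e.\ $\deg(S) < -d/2$, a contradiction. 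The one remaining configuration is $\dim(W) = 3$ with $\alpha$ non-injective, forcing $\rank(E_W) = 1$ and $\rank(K) = 1$. Here the map $\mev/S \to (V/W)\otimes\Oc \cong \Oc$ has a full-rank kernel and so vanishes, whence the snake sequence gives $\operatorname{coker}(\alpha) = E/E_W \cong \Oc$ and thus $\deg(E_W) = d$. Linear stability applied to $(E_W, W)$ then demands $\deg(E_W)/(3-1) > d/2$, i.e.\ $\deg(E_W) > d$, contradicting $\deg(E_W) = d$.

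The main obstacle, and the step I would treat most carefully, is this last configuration: the snake-lemma analysis pinning down $\operatorname{coker}(\alpha) \cong \Oc$, hence $\deg(E_W) = d$ exactly, so that \emph{strict} linear stability is violated by an equality rather than a strict inequality. It is also worth emphasizing the calibration of the hypotheses: $d_3$ (four sections) is precisely what kills the case $W = V$, whereas the weaker bound coming from $d_2$ in the case $\dim(W)=3$ would not suffice, so linear stability and $d < 2d_3$ genuinely cover complementary cases.
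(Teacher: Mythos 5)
Your proposal is correct and follows essentially the same route as the paper's proof: a destabilizing line subbundle $S$, the Butler diagram, the snake lemma, and a case split equivalent to the paper's (your cases ``$\alpha$ injective'', ``$\dim W = 3$ with $\alpha$ non-injective'', and ``$W = V$'' correspond exactly to the paper's $N = 0$, $T \neq 0$, and $T = 0$). The only cosmetic differences are that you argue by contradiction rather than by contrapositive and add the preliminary observation $h^0(C, \mev) = 0$ to pin down $\deg(S) < 0$.
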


\begin{proof} Suppose $\mev$ is not stable. Let $S \subset \mev$ be a destabilizing line subbundle. The full Butler diagram of $(E, V)$ by $S$ is as follows:
\[
\xymatrix{ & 0 \ar[d] & 0 \ar[d] & N \ar[d] & \\
 0 \ar[r] & S \ar[r] \ar[d] & \Oc \otimes W \ar[r] \ar[d] & F_S \ar[r] \ar[d] & 0 \\
 0 \ar[r] & \mev \ar[r] \ar[d] & \Oc \otimes V \ar[r] \ar[d] & E \ar[r] \ar[d] & 0 \\
 0 \ar[r] & Q \ar[r] \ar[d] & \Oc \otimes \left( \frac{V}{W} \right) \ar[r] \ar[d] & T \ar[r] \ar[d] & 0 \\
 & 0 & 0 & 0 . & }
\]
Firstly, suppose that $N = 0$, so that $F_S$ is a subsheaf of $E$ of degree $-\deg(S) \le \frac{d}{2}$. As $F_S$ is generated by $W$, we have $\dim (W) - \rank (F_S) \ge 1$. Therefore
\[ \frac{\deg (F_S)}{\dim (W) - \rank (F_S)} \ \le \ \frac{d/2}{1} \ = \ \frac{d}{4-2} \ = \ \frac{\deg (E)}{\dim (V) - \rank (E)} , \]
and by Lemma \ref{SlopeCrit} we see that $(E, V)$ is not linearly stable.

 \vspace{.25cm}
 
\noindent Suppose that $N$ is nonzero. By the snake lemma, we have an exact sequence
\[
0 \ \to \ N \ \to \ Q \ \to \Oc \otimes \left( \frac{V}{W} \right) \ \to \ T \ \to \ 0 .
\]
Thus $N$ injects into $Q$ and $Q/N$ injects into a trivial sheaf. As $Q$ has rank one and $\Oc$ is torsion free, $Q \cong N$ and $\Oc \otimes \left( \frac{V}{W} \right) \cong T$.

 \vspace{.25cm}
 
\noindent If $T$ is nonzero then $E$ has a trivial quotient. Thus $E$ is an extension $0 \to L \to E \to \Oc \to 0$ where $L$ is a line bundle of degree $d$ and $(L, H^0 ( C, L ) \cap V )$ a generated type $(1, d, 3)$ subsystem of $(E, V)$. Then
\[
\frac{\deg (L)}{\dim \left( H^0 ( C, L ) \cap V \right) - \rank (L)} \ = \ \frac{d}{2} \ = \ \frac{\deg(E)}{\dim (V) - \rank (E)} ,
\]
so $(E, V)$ is not linearly stable by Lemma \ref{SlopeCrit}.

 \vspace{.25cm}
 
\noindent If $T = 0$ then $W = V$ and $h^0 ( C, S^\vee ) \ge 4$, whence $\deg ( S^\vee ) \ge d_3$. As $S$ destabilizes $\mev$, we have
\[
d_3 \ \le \ \deg ( S^\vee ) \ \le \ \frac{d}{2} ,
\]
and $d \ge 2 d_3$.

 \vspace{.25cm}
 
\noindent In summary, we have shown that if $\mev$ is not stable, then $(E, V)$ is not linearly stable and/or $d \ge 2 d_3$. The statement follows.
\end{proof}

\begin{remark}
The above should be compared with \cite[Theorem 5.3 and Corollary 5.4]{mistrettastoppino}, which for systems of type $(1, d, n)$ give sufficient conditions for stability of $\mlv$ in terms of linear stability of $(L, V)$ and the Clifford index of $C$.
\end{remark}

\noindent Before proceeding, we prove a slight generalization of part of \cite[Remark 7.8]{mistrettastoppino} on the behavior of linear stability under pullbacks.

\begin{lemma} \label{PullbackLinSt}
Let $f \colon \tC \to C$ be a degree $k$ covering of curves. Let $( E, V )$ be a generated coherent system on $C$. If $(E, V)$ is linearly (semi)stable then so is $( f^* E , f^* V )$.
\end{lemma}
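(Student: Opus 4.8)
The plan is to work through the equivalent formulation of Lemma \ref{LStHigherRankEquiv}, reducing the whole question to a statement about generated subsystems of positive degree, and then to check that all the relevant invariants transform predictably under $f^*$. Write $(E, V)$ as type $(r, d, n)$. Since $f$ is a finite covering, the pullback $f^* \colon V \to H^0 ( \tC , f^* E )$ is injective, so $\dim ( f^* V ) = n$; since $f^*$ is right exact, $f^* V$ generates $f^* E$, so $( f^* E , f^* V )$ is indeed a generated coherent system, of type $(r, kd, n)$ because $\deg ( f^* E ) = k \cdot \deg ( E )$. Its reference quantity is therefore $\frac{kd}{n-r}$.

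Next I would establish the key compatibility: for every subspace $W \subseteq V$, the subsheaf of $f^* E$ generated by $f^* W$ equals $f^* ( E_W )$. This holds because $f$ is flat (a finite morphism of smooth curves), so $f^*$ is exact and commutes with the formation of the image of a sheaf map. Concretely, pulling back the evaluation $W \otimes \Oc \to E$ produces the evaluation $( f^* W ) \otimes \cO_{\tC} \to f^* E$, whose image is $f^* ( E_W )$. As a consequence, since subspaces of $f^* V$ correspond bijectively to subspaces of $V$, the generated subsystems of $( f^* E , f^* V )$ are exactly the pullbacks $( f^* F , f^* W )$ of generated subsystems $( F, W )$ of $( E, V )$.

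With this correspondence the bookkeeping is immediate: $\rank ( f^* F ) = \rank ( F )$, $\dim ( f^* W ) = \dim ( W )$, and $\deg ( f^* F ) = k \cdot \deg ( F )$, so in particular $\deg ( f^* F ) > 0$ if and only if $\deg ( F ) > 0$. Thus any generated subsystem of $( f^* E , f^* V )$ of positive degree equals $( f^* F , f^* W )$ for a generated subsystem $( F , W )$ of $( E , V )$ with $\deg ( F ) > 0$, and linear (semi)stability of $( E , V )$ gives
\[
\frac{\deg ( F )}{\dim ( W ) - \rank ( F )} \ \geq \ \frac{d}{n-r}
\]
(strictly in the stable case). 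Multiplying both sides by $k > 0$ and substituting the pullback invariants yields
\[
\frac{\deg ( f^* F )}{\dim ( f^* W ) - \rank ( f^* F )} \ \geq \ \frac{kd}{n-r} ,
\]
which is the required inequality by Lemma \ref{LStHigherRankEquiv}; strictness is preserved since $k > 0$.

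The only delicate point is the compatibility $( f^* E )_{f^* W} = f^* ( E_W )$; everything else is formal. I expect this to be the step meriting care, and it rests on the flatness of $f$ so that pullback commutes with taking the image subsheaf. Working with the positive-degree formulation of Lemma \ref{LStHigherRankEquiv} conveniently avoids any need to compare the triviality conditions $E_W \cong W \otimes \Oc$ and $( f^* E )_{f^* W} \cong ( f^* W ) \otimes \cO_{\tC}$ directly.
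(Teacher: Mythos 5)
Your proposal is correct and follows essentially the same route as the paper: both reduce to the observation that $\left( f^* E \right)_{f^* W} = f^* ( E_W )$ for each $W \subseteq V$ (you justify it via flatness and exactness of $f^*$, the paper via the universal property of the fiber product) and then conclude by the degree computation $\deg ( f^* E_W ) = k \cdot \deg ( E_W )$ together with the invariance of $\dim ( W )$ and $\rank ( E_W )$ under pullback. No gaps.
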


\begin{proof} Let $( \tF, \tW )$ be a generated coherent subsystem of $( f^* E, f^* V )$. As $f^* \colon H^0 ( C, E ) \to H^0 ( \tC , f^* E )$ is injective, $\tW = f^* W$ for a subspace $W \subseteq V$. Now there is a commutative diagram
\[
\xymatrix{ \cO_{\tC} \otimes f^* W \ar[dr] \ar[drr]^a \ar[ddr]_b & & \\
& f^* E_W \ar[r] \ar[d] & E_W \ar[d] \\
& \tC \ar[r]^f & C }
\]
where $a( \tx , f^* s ) = s( f(x) )$ and $b(\tx , f^* s ) = \tx$, and the existence of $c$ follows from the universal property of fiber products. But by definition, $\left( f^* E \right)_{f^* W}$ is the image of the evaluation map $\cO_{\tC} \otimes f^* W \to f^* E$. Thus $\left( f^* E \right)_{f^* W} = f^* E_W$.

 \vspace{.25cm}
 
\noindent Therefore, we obtain
\[
\frac{\deg ( \tF )}{\dim ( \tW ) - \rank ( \tF )} \ = \ \frac{k \cdot \deg ( E_W )}{\dim ( W ) - \rank ( E_W )} .
\]
Thus if $( E, V )$ is linearly semistable then
\[
\frac{\deg ( \tF )}{\dim \tW - \rank ( \tF )} \ \ge \ k \cdot \frac{\deg (E)}{\dim ( V ) - \rank ( E )} \ = \ \frac{\deg ( f^* E )}{\dim ( f^* V ) - \rank ( f^* E )} .
\]
It follows that $( f^* E, f^* V )$ is linearly semistable. The statement for linear stability is proven by changing ``$\ge$'' to ``$>$'' in the last inequality.
\end{proof}

\subsubsection{An example}

\noindent We shall now use Proposition \ref{Sufficient2d4} and the above lemma to prove a particular case of Conjecture \ref{StrongButlerConjecture}. The following, involving a bielliptic curve, complements \cite[Theorem 5.10 (iii)]{bmno} which applies to systems of larger degree over general curves.

Let $Z$ be an elliptic curve. Let $M_1$ and $M_2$ be line bundles over $Z$ of degrees $3$ and $4$ respectively. Then by Serre duality (with $g = 1$) and Riemann--Roch,
\[
h^1 ( Z, M_2^\vee \otimes M_1 ) \ = \ h^0 ( Z, M_1^\vee \otimes M_2 ) \ = \ 1 .
\]
Thus there is, up to isomorphism of bundles, a unique indecomposable extension
\begin{equation} \label{ConstructionE}
0 \ \to \ M_1 \ \to \ E \ \to \ M_2 \ \to \ 0 .
\end{equation}
As $h^1 ( Z, M_1 ) = 0 = h^1 ( Z, M_2 )$, we have $h^0 ( Z, E ) = 7$ by Riemann--Roch.

\begin{proposition} \label{EllipticLinSt}
For a general $V \in \Gr \left( 4, H^0 ( Z, E ) \right)$, the type $(2, 7, 4)$ coherent system $( E, V )$ is generated and linearly stable.
\end{proposition}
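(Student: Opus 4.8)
The plan is to use that $E$ is completely understood as a bundle on the elliptic curve $Z$, and then to verify linear stability by three dimension counts on $\Gr(4, H^0(Z,E))$. First I would record that $E$ is stable and globally generated. Stability follows from Atiyah's classification: $E$ is indecomposable of rank $2$ and degree $7$ with $\gcd(2,7)=1$, hence stable; in particular every line subsheaf of $E$ has degree at most $3$. Global generation follows from (\ref{ConstructionE}), since $M_1,M_2$ are globally generated and $h^1(Z,M_1)=0$. For generatedness of a general $V$, set $\hat K_p := \Ker\!\left(H^0(Z,E)\to E|_p\right)$, of dimension $5$; then $V$ fails to generate at $p$ precisely when $\dim(V\cap \hat K_p)\ge 3$, a Schubert condition of codimension $3$ in $\Gr(4,7)$. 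Letting $p$ range over $Z$, the non-generating locus has dimension at most $10<12$, so a general $V$ generates $E$.

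The heart of the matter is linear stability. By Lemma \ref{SlopeCrit} it is enough to show $\deg(F)/(\dim W-\rank F) > \tfrac{d}{n-r}=\tfrac72$ for every proper generated subsystem $(F,W)$ with $\deg F>0$. Using $\deg F\le 3$ for line subsheaves and $\deg F\le 6$ for proper full-rank subsheaves, a short case analysis on $(\rank F,\dim W)$ shows the inequality can only fail in two situations: (I) a two-dimensional $W\subseteq V$ generating a rank one subsheaf (necessarily of degree $2$ or $3$, so that the ratio is $\le 3<\tfrac72$); or (II) a three-dimensional $W\subseteq V$ generating a rank two subsheaf of degree at most $3$ (ratio $\le 3$). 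It remains to show a general $V$ contains no such $W$.

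For (I), the planes $\Pi=\langle s_1,s_2\rangle\subseteq H^0(Z,E)$ generating a rank one subsheaf are exactly those with $s_1\wedge s_2=0$ in $H^0(Z,\det E)$; as $h^0(\det E)=7$, this cuts out a locus $\Sigma_1$ of dimension $3$ inside the $10$-dimensional $\Gr(2,7)$. The incidence $\{(\Pi,V):\Pi\subseteq V\}$ then has dimension $3+6=9<12$, so a general $V$ contains no such $\Pi$. For (II), such a $W$ must fail to generate $E$ along a length $\ge 4$ divisor, hence at four points $p_1,\dots,p_4$. Evaluation gives an injection $H^0(Z,E)\hookrightarrow\bigoplus_i E|_{p_i}\cong\C^8$ (injective since $E(-p_1-\cdots-p_4)$ is stable of negative slope, so has no sections) with image a hyperplane $Q$; the rank drop at each $p_i$ forces $\ev(W)$ into $\Lambda:=\bigoplus_i\lambda_i$ for lines $\lambda_i\subseteq E|_{p_i}$, and since $\Lambda\cap Q$ is generically three-dimensional this determines $\ev(W)$. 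Hence the $W$ failing at four fixed points form a $4$-dimensional family; varying the points gives $\dim\Sigma_2\le 8$, so $\{(W,V):W\subseteq V\}$ has dimension $8+3=11<12$ and a general $V$ avoids (II). Therefore $(E,V)$ is linearly stable.

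The step I expect to be the main obstacle is the count in (II). A naive parametrisation --- describing the subsheaf $G=E_W$ through $0\to N\to \Oz^{\,3}\to G\to 0$ together with an inclusion $G\hookrightarrow E$ --- overcounts and lands on the borderline value $\dim\Sigma_2=9$, which would only give $\dim\{(W,V)\}=12$ and leave it unclear whether the bad locus exhausts $\Gr(4,7)$. The resolution is that this count omits the scaling automorphism of the kernel line bundle $N$; the true value is $8$, and the evaluation argument above establishes the strict inequality directly. Getting this borderline count right, and checking that the reduction to (I) and (II) is genuinely exhaustive (in particular that full-rank elementary transformations of degree $\ge 4$ cause no trouble), is where the real care is needed.
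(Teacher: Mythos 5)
Your overall strategy --- reduce linear stability to excluding two kinds of bad subspace $W \subset V$ and then bound the dimensions of the corresponding incidence loci in $\Gr ( 4, H^0 ( Z, E ) )$ --- is the same as the paper's, your reduction to cases (I) and (II) is exhaustive, and your target numbers ($9 < 12$ and $11 < 12$) agree with the paper's. The problem is that both of your key dimension counts are justified in ways that do not deliver the required \emph{upper} bounds. In case (I) you get $\dim \Sigma_1 = 3$ by imposing the $7$ linear conditions $s_1 \wedge s_2 = 0$ on the $10$-dimensional $\Gr ( 2, H^0 ( Z, E ) )$; but cutting by linear conditions only bounds the dimension from \emph{below} ($\ge 3$), and nothing you say excludes an excess-dimensional component of the base locus. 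The honest upper bound comes from parametrizing such planes structurally: a plane generating a rank-one subsheaf lies in $i ( H^0 ( Z, M ) )$ for some line subsheaf $i \colon M \hookrightarrow E$ of degree $e \in \{ 2, 3 \}$ ($e \ge 2$ since $h^0 \ge 2$, $e \le 3$ by stability), and one computes $\dim \Quot^{1, 7-e} ( E ) = 7 - 2e$ (unobstructed because $h^0 ( Z, ( E/M )^\vee \otimes M ) = 0$) plus $\dim \Gr ( 2, H^0 ( Z, M ) ) = 2(e-2)$, giving $3$ in both cases. This is exactly the count the paper performs, and you need some version of it.

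In case (II) there is a genuine hole: you assume the cokernel $E / E_W$, of length $\ge 4$, is supported at four \emph{distinct} points, so your parametrization by $( p_1, \dots, p_4, \lambda_1, \dots, \lambda_4 )$ misses the strata where the support is non-reduced; and the step ``$\Lambda \cap Q$ is generically three-dimensional, hence determines $\ev ( W )$'' silently discards the degenerate configurations with $\Lambda \subseteq Q$. Both could be patched by stratifying, but the paper's route avoids the issue entirely: it classifies the generated rank-two subsheaves $F$ of degree $\le 3$ with $h^0 ( Z, F ) \ge 3$ --- they are either $\Oz \oplus M'$, which reduces to case (I), or a nonsplit extension of a degree-$2$ by a degree-$1$ line bundle, forcing $\deg ( F ) = 3$ and $h^0 ( Z, F ) = 3$, so that $W = H^0 ( Z, F )$ is determined by $F$ --- and then counts over $\Quot^{0,4} ( E )$, of dimension $8$, to get $8 + 3 = 11 < 12$. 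Your closing worry about a parametrization landing on the borderline value $9$ is a red herring: the paper's count never meets that borderline. Your generatedness argument via the Schubert condition is fine (indeed slightly more explicit than the paper's).
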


\begin{proof}
Let us firstly check that $E$ is generated by global sections. For any $z \in Z$, by Serre duality and since $K_Z = \Oz$ we have $h^1 ( Z, E ( -z ) ) = h^0 ( Z, E^\vee (z) )$. Dualizing (\ref{ConstructionE}) and tensoring by $\Oz (z)$ we see easily that $h^0 ( Z, E^\vee (z) ) = 0$. 
 Thus by Riemann--Roch, $h^0 ( Z, E (-z) ) = 5 = h^0 ( Z, E ) - \rank (E)$ for any $z \in Z$, and $E$ is generated. It follows that a general coherent system $(E, V)$ where $V \subset H^0 ( Z, E )$ is of dimension $4$ is generated.

 \vspace{.25cm}
 
\noindent For linear stability, we must check (\ref{LStIneq}) for each $W \subset V$ such that $E_W$ is not trivial. We check the cases where $W$ has dimension $1$, $2$ and $3$ separately. Firstly, if $\dim W = 1$ then $E_W \cong \Oc$, and there is nothing to check.

 \vspace{.25cm}
 
\noindent Suppose that $\dim W = 2$. We wish to show that $E_W$ has rank two. If not, then
\[
W \ \subseteq \ i \left( H^0 ( Z , M ) \right) \cap V
\]
where $M$ is a line bundle of degree $e > 0$ and $i \colon M \to E$ a vector bundle injection. (Note that such a $W$ generates an invertible subsheaf of positive degree whose saturation is $M$, but for the present argument we do not need to assume that $E_W = M$.) As $E$ is indecomposable, $e \le 3$, and since $h^0 ( Z, M ) \ge \dim (W) = 2$ we have $e \ge 2$. For such an $i$, we have a short exact sequence
\[
0 \ \to \ M \ \xrightarrow{i} \ E \ \to \ E/M \ \to \ 0 ,
\]
and we obtain a saturated element of $\Quot^{1, 7-e} ( E )$. The obstruction space of the latter scheme at the point $\left[ M \xrightarrow{i} E \right]$ is $H^1 ( Z , \Hom ( M , E/M ) )$. By Serre duality, this is identified with $H^0 ( Z , (E/M)^\vee \otimes M )$. As $\deg ((E/M)^\vee \otimes M ) = 2e - 7 < 0$, this is zero. Therefore
\[
\dim \Quot^{1,7-e} (E) \ = \ \chi \left( Z , M^\vee \otimes (E/M) \right) \ = \ 7 - 2e .
\]
Now for a fixed $M \to Z$ of degree $e \in \{2, 3 \}$ and a fixed $i \colon M \to E$, we obtain a subspace $i \left( H^0 ( Z, M ) \right)$ of $H^0 ( Z, E )$. By Riemann--Roch, $\dim \left( i \left( H^0 ( Z, M ) \right) \right) = e$. We denote this space by $\C^e_i$. Consider now the locus
\[
\Sigma_i \ := \ \left\{ V \in \Gr \left( 4, H^0 ( Z , E ) \right) : \dim \left( V \cap \C^e_i \right) \ge 2 \right\} .
\]
Any $V \in \Sigma_i$ fits into a sequence $0 \to V_1 \to V \to V_2 \to 0$ where $V_1 \in \Gr \left( 2, \C^e_i \right)$ and $V_2 \in \Gr \left( 2, H^0 ( Z , E ) / V_1 \right)$. Therefore,
\[
\dim ( \Sigma_i ) \ \le \ 2 \cdot ( e - 2 ) + 2 \cdot \left( \left( h^0 ( Z, E ) - 2 \right) - 2 \right) \ = \ 2e - 4 + 2 \cdot ( 5 - 2 ) \ = \ 2e + 2 .
\]
Thus the dimension of the union
\begin{equation} \label{UnionSigmai}
\bigcup_{\left[ M \xrightarrow{i} E \right] \ \in \ \Quot^{1, 7 - e} (E)} \Sigma_i
\end{equation}
is at most
\[
\dim ( \Sigma_i ) + \dim \Quot^{1, 7 - e} (E) \ \le \ ( 2e + 2 ) + ( 7 - 2e ) \ = \ 9 .
\]
On the other hand, $\dim \left( \Gr \left( 4, H^0 ( Z, E ) \right) \right) 
 = 12$. Thus for $e \in \{ 2 , 3 \}$, a general $V \in \Gr \left( 4, H^0 ( Z, E ) \right)$ does not intersect $\C^e_i$ in dimension two or more for any $i \colon M \to E$. We may therefore assume that $E_W$ is a trivial bundle of rank two, as desired.

 \vspace{.25cm}
 
\noindent Finally, we consider $W$ of dimension three. Assuming $V$ to be outside (\ref{UnionSigmai}), we may assume that $\rank ( E_W ) = 2$. In order to satisfy (\ref{LStIneq}), we must show that for general $V \in \Gr \left (4, H^0 ( Z, E ) \right)$, for any elementary transformation $F \subset E$ with $\dim ( V \cap H^0 ( Z, F ) ) = 3$ where $F$ is generated by $V \cap H^0 ( Z, F )$, we have
\[
\frac{\deg (F)}{3 - 2} \ > \ \frac{\deg ( E )}{ 4 - 2 } \ = \ \frac{7}{2} ,
\]
that is, $\deg F \ge 4$. It will suffice to show that a general $V$ does not contain a three-dimensional subspace of $H^0 ( Z, F )$ for any generated elementary transformation $F \subset E$ of degree $e \le 3$. 

 \vspace{.25cm}
 
\noindent \textbf{Claim:} A generated bundle $F \to Z$ of rank $2$ and degree $e \le 3$ with $h^0 ( Z, F ) \ge 3$ must be of one of the following forms:
\begin{enumerate}
\item[(i)] $F \cong \Oz \oplus M'$ for a line bundle $M'$ of degree $e \in \{ 2, 3 \}$
\item[(ii)] a nonsplit extension $0 \to N_1 \to F \to N_2 \to 0$ for line bundles $N_1$ and $N_2$ of degree $1$ and $2$ respectively; in particular, $\deg (F) = 3$
\end{enumerate}
To see this: If $F$ is decomposable then it is a sum $N_1 \oplus N_2$ of generated line bundles whose degrees sum to $e$. As no degree $1$ line bundle over a curve of positive genus is generated, the only possibility is that in (i). If $F$ is indecomposable, then it is a nonsplit extension $0 \to N_1 \to F \to N_2 \to 0$ where $\deg ( N_1 ) + \deg ( N_2 ) = e$ and $\deg ( N_2 ) \le \deg (N_1 ) + 1$ and $h^0 ( Z, N_1 ) + h^0 ( Z, N_2 ) \ge 3$. It is easy to see that the only possibility is that $\deg (N_1) = 1$ and $\deg (N_2) = 2$, and in particular $e = 3$. Thus $F$ is of the form (ii). This proves the Claim.\\
\\
Now if $j \colon \Oz \oplus M' \to E$ is a sheaf injection and $V$ intersects $j ( H^0 ( Z, \Oz \oplus M' ) ) = \C^{e+1}$ in dimension $3$, then in particular $V \cap H^0 ( Z, M' )$ has dimension at least $2$ for the degree $e$ line bundle $M'$. We have seen above that this does not happen for a generic $V$. Thus we may assume that $F$ is of form (ii). In particular, $e = 3$, and since $h^1 ( Z, N_1 ) = h^1 ( Z, N_2 ) = 0$, by Riemann--Roch we have $h^0 ( Z, F ) = 3$. 

 \vspace{.25cm}
 
\noindent Now any degree $3$ elementary transformation $F \subset E$ defines an element of $\Quot^{0, 4} ( E )$, which is of dimension $\rank (E) \cdot 4 = 8$. For a fixed $\left[ F \xrightarrow{j} E \right] \in \Quot^{0, 4} (E)$, consider the locus
\[
\Sigma_j \ := \ \left\{ V \in \Gr \left( 4, H^0 ( Z, E ) \right) : H^0 ( Z, F ) \subset V \right\} .
\]
Any $V$ belonging to $\Sigma_j$ fits into a sequence of the form
\[
0 \ \to \ j \left( H^0 ( Z, F ) \right) \ \to \ V \ \to \ V' \to \ 0
\]
where $V' \in \Gr \left( 1, H^0 ( Z , E ) / W \right) = \PP^3$. Therefore, the dimension of the union
\[
\bigcup_{\left[ F \to E \right] \ \in \ \Quot^{0, 4} (E)} \Sigma_j
\]
is bounded above by
\[
\dim ( \Sigma_j ) + \dim \left( \Quot^{0, 4} ( E ) \right) \ \le 3 + 8 \ = \ 11 .
\]
Again, this is strictly less than $\dim \Gr \left( 4, H^0 ( Z , E ) \right) = 12$. Thus if $V \in \Gr \left( 4, H^0 ( Z , E ) \right)$ is general, then $V$ does not intersect $H^0 ( Z, F )$ in dimension $3$ for any subsheaf $F \subset E$ of rank $2$ and degree $\le 3$.

 \vspace{.25cm}
 
\noindent Putting these facts together, we conclude that $( E, V )$ is linearly stable, as desired.
\end{proof}

\begin{corollary} \label{biellipticStable}
Let $Z$ and $E$ be as above, and let $V \in \Gr \left( 4, H^0 ( Z, E ) \right)$ be general. Let $f \colon C \to Z$ be a double cover, where $C$ has genus $g \ge 6$. Then $M_{f^* V, f^* E}$ is stable.
\end{corollary}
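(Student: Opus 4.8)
The plan is to deduce the statement from Proposition \ref{Sufficient2d4} after pulling back $( E, V )$ along $f$; the only substantial point is a lower bound on the invariant $d_3 ( C )$.

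First I would form the pullback $( f^* E, f^* V )$. Since $\deg f = 2$ and $f^* \colon H^0 ( Z, E ) \to H^0 ( C, f^* E )$ is injective (as $\Oz$ is a direct summand of $f_* \Oc$), this is a coherent system of type $(2, 14, 4)$: indeed $\rank ( f^* E ) = 2$, $\deg ( f^* E ) = 2 \cdot 7 = 14$ and $\dim ( f^* V ) = \dim V = 4$. It is generated because $( E, V )$ is generated by Proposition \ref{EllipticLinSt} and $( f^* E )_{f^* V} = f^* E_V = f^* E$, exactly as in the proof of Lemma \ref{PullbackLinSt}. Moreover $( E, V )$ is linearly stable by Proposition \ref{EllipticLinSt}, so Lemma \ref{PullbackLinSt} shows that $( f^* E, f^* V )$ is linearly stable.

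Now Proposition \ref{Sufficient2d4} applies to the linearly stable type $(2, 14, 4)$ system $( f^* E, f^* V )$ as soon as $14 < 2 \cdot d_3 ( C )$, that is, as soon as $d_3 ( C ) \ge 8$, and this would immediately yield stability of $M_{f^* V, f^* E}$. Thus everything reduces to showing that every line bundle $L$ on $C$ with $h^0 ( C, L ) \ge 4$ satisfies $\deg L \ge 8$. I would argue according to the value of $h^1 ( C, L )$. If $h^1 ( C, L ) = 0$, then Riemann--Roch and $g \ge 6$ give $\deg L = h^0 ( C, L ) + g - 1 \ge 9$; if $h^1 ( C, L ) = 1$, then $\deg L = h^0 ( C, L ) + g - 2 \ge 8$. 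In the remaining case $h^1 ( C, L ) \ge 2$, the line bundle $L$ contributes to the Clifford index, so $\deg L - 2 ( h^0 ( C, L ) - 1 ) \ge \Cliff ( C )$, and hence $\deg L \ge 2 \, h^0 ( C, L ) \ge 8$ provided $\Cliff ( C ) \ge 2$.

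The main obstacle is therefore to prove that a bielliptic curve $C$ of genus $g \ge 6$ has $\Cliff ( C ) \ge 2$; equivalently, that $C$ is neither hyperelliptic, nor trigonal, nor a smooth plane quintic. For the first two I would apply the Castelnuovo--Severi inequality to the degree-two map $f \colon C \to Z$ together with the degree-two (resp.\ degree-three) map $C \to \PP^1$ induced by a hypothetical $g^1_2$ (resp.\ $g^1_3$). These two maps admit no common intermediate cover of degree $\ge 2$: such a cover would have degree dividing $\gcd ( 2, 2 ) = 2$, forcing the two quotients $\PP^1$ and $Z$ to coincide (impossible), resp.\ dividing $\gcd ( 2, 3 ) = 1$. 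Castelnuovo--Severi then gives $g \le 2 \cdot 0 + 2 \cdot 1 + ( 2 - 1 )( 2 - 1 ) = 3$, resp.\ $g \le 3 \cdot 0 + 2 \cdot 1 + ( 3 - 1 )( 2 - 1 ) = 4$, contradicting $g \ge 6$. Finally, a smooth plane quintic (the only plane-curve case, of genus $6$) cannot be bielliptic: its automorphisms are linear, so an involution fixes at most a line together with one further point, meeting the quintic in at most $6$ points, whereas Riemann--Hurwitz forces a bielliptic involution on a genus-six curve to have $10$ fixed points. This rules out $\Cliff ( C ) \in \{ 0, 1 \}$, so $\Cliff ( C ) \ge 2$, whence $d_3 ( C ) \ge 8$ and the corollary follows from Proposition \ref{Sufficient2d4}.
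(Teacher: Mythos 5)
Your proof is correct and follows the same skeleton as the paper's: pull back $(E,V)$ along $f$ to get a linearly stable generated system of type $(2,14,4)$ (Proposition \ref{EllipticLinSt} plus Lemma \ref{PullbackLinSt}), then invoke Proposition \ref{Sufficient2d4}, so that everything reduces to $d_3(C) \ge 8$. The one genuine difference is how that bound is obtained: the paper simply cites Lange--Newstead's computation $d_3 = 2\cdot 3 + 2 = 8$ for bielliptic curves of genus $g \ge 6$, whereas you prove the inequality $d_3(C)\ge 8$ from scratch, splitting on $h^1(C,L)$ and, in the Clifford-contributing case, reducing to $\Cliff(C)\ge 2$, which you establish by excluding the hyperelliptic and trigonal cases via Castelnuovo--Severi and the plane quintic case via the fixed-point count of a linear involution. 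All of these steps check out (the Castelnuovo--Severi bounds $g\le 3$ and $g\le 4$ are computed correctly, and a bielliptic involution on a genus $6$ curve indeed has $10$ fixed points, exceeding the at most $6$ fixed points of a linear involution on a plane quintic). Your route is longer but self-contained and only needs the lower bound on $d_3$ rather than its exact value; it does, however, silently use the standard classification that $\Cliff(C)\le 1$ forces $C$ to be hyperelliptic, trigonal, or a smooth plane quintic, which you should cite explicitly if you want the argument to be fully rigorous.
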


\begin{proof}
By Proposition \ref{EllipticLinSt}, the coherent system $(E, V)$ is linearly stable. Thus $( f^* E , f^* V )$ is linearly stable by Lemma \ref{PullbackLinSt} and is of type $(2, 14, 4)$. By \cite[Remark 4.5 (d)]{langenewstead}, for a bielliptic curve $C$ of genus $g \ge 6$, we have $d_3 = 2 \cdot 3 + 2 = 8$. Thus $\deg ( f^* E ) = 14 < 2 \cdot d_3$. By Proposition \ref{Sufficient2d4}, the dual span bundle $M_{f^* V , f^* E}$ is stable.
\end{proof}

\noindent This implies Conjecture \ref{StrongButlerConjecture} for generated coherent systems of type $(2, 14, 4)$ over bielliptic curves of sufficiently large genus:

\begin{corollary} \label{biellipticButler}
For any bielliptic curve $C$ of genus $g \ge 6$, Conjecture \ref{StrongButlerConjecture} holds for a component of the moduli space of $\alpha$-stable type $(2, 14, 4)$ coherent systems on $C$, for $\alpha$ close to $0$.
\end{corollary}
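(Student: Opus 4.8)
The plan is to realise the system $( f^* E, f^* V )$ produced in Corollary \ref{biellipticStable} as a point of the $\alpha$-stable locus of a component of the moduli space $G_\alpha ( 2, 14, 4 )$ (Theorem \ref{moduli}) for $\alpha$ close to $0$, and then to spread the stability of its dual span bundle to the general member of that component by an openness argument. Here $f \colon C \to Z$ is the bielliptic structure and $( E, V )$ is the generated type $(2, 7, 4)$ system of Proposition \ref{EllipticLinSt}; by Lemma \ref{PullbackLinSt} and Corollary \ref{biellipticStable}, $( f^* E, f^* V )$ is a generated, linearly stable system of type $(2, 14, 4)$ whose dual span bundle $M_{f^* V, f^* E}$ is stable.

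The first and main task is to check that $( f^* E, f^* V )$ is $\alpha$-stable for $\alpha$ close to $0$. Since $E$ is indecomposable of coprime rank $2$ and degree $7$ on the elliptic curve $Z$, it is stable, so $f^* E$ is semistable, being the pullback of a semistable bundle under a finite morphism of smooth curves; moreover the direct image computation $h^0 ( C, \End ( f^* E ) ) = h^0 ( Z, \End E ) + h^0 ( Z, \End E \otimes \eta^{-1} ) = 1 + 0 = 1$, using $f_* \Oc = \Oz \oplus \eta^{-1}$, shows that $f^* E$ is simple. As $f^* E$ has slope $7$ and no proper saturated subsheaf of rank $2$, for $\alpha$ sufficiently small the only subsystems $(F, W)$ that can violate the strict $\alpha$-slope inequality of Definition \ref{alphaSt} are the line subbundles $N \subset f^* E$ of degree exactly $7$ (those with $\mu ( F ) < 7$ being harmless for small $\alpha$, those with $\mu ( F ) > 7$ excluded by semistability). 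For such an $N$, strict $\alpha$-stability amounts to $\dim ( f^* V \cap H^0 ( C, N ) ) \le 1$.

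To control these I would use simplicity together with a dimension count. From the non-split sequence $0 \to f^* M_1 \to f^* E \to f^* M_2 \to 0$, with $\deg ( f^* M_1 ) = 6 < 7 < 8 = \deg ( f^* M_2 )$, any degree $7$ line subbundle injects into $f^* M_2$ and hence equals $f^* M_2 ( -x )$ for a point $x$ over which the pulled-back extension class lifts; two distinct such saturated subbundles would force $f^* E$ to decompose, contradicting simplicity, so there is at most one such $N$. Since $f^* \colon H^0 ( Z, E ) \to H^0 ( C, f^* E )$ is an isomorphism for $g \ge 5$, the subspace $f^* V$ is a general four-dimensional subspace of the seven-dimensional $H^0 ( C, f^* E )$; as $h^0 ( C, N ) \le 4$ by Clifford's theorem, a dimension count in $\Gr ( 4, H^0 ( C, f^* E ) )$ in the spirit of the proof of Proposition \ref{EllipticLinSt} shows that a general $V$ meets $H^0 ( C, N )$ in dimension at most $1$. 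This yields the required $\alpha$-stability of $( f^* E, f^* V )$.

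Finally I would conclude by openness. The system $( f^* E, f^* V )$ then lies in the $\alpha$-stable locus of some irreducible component $\mathcal Z$ of $G_\alpha ( 2, 14, 4 )$. Generatedness of a coherent system and stability of its dual span bundle are both open conditions in families; since they hold at $( f^* E, f^* V )$ by construction and by Corollary \ref{biellipticStable}, they hold on a dense open subset of $\mathcal Z$. Hence the general member of $\mathcal Z$ is a generated $\alpha$-stable coherent system with stable dual span bundle, which is precisely the content of Conjecture \ref{StrongButlerConjecture} for this component. I expect the genuine difficulty to lie entirely in the $\alpha$-stability step: the pullback of a stable bundle under a ramified double cover can fail to be stable, so one really must pin down the (at most one) slope-$7$ line subbundle of $f^* E$ and arrange that a general $V$ avoids excess sections along it; once a single such good point of the moduli space is in hand, the propagation to the general member is formal.
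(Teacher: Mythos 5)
Your proposal is correct, and its overall skeleton (exhibit one good point of the moduli space, then propagate by openness of generatedness and of stability of the DSB) is exactly the paper's; the difference lies in how the $\alpha$-stability of $(f^* E, f^* V)$ is established. The paper dispatches this in one line by citing the proof of Proposition 5.3 of \cite{langenarasimhan}, which shows that $f^* E$ has Segre invariant $2$; hence $f^* E$ is a \emph{stable} bundle, and by the discussion following Theorem \ref{moduli} the system $(f^* E, f^* V)$ is $\alpha$-stable for $\alpha$ near $0$ for \emph{any} choice of $V$. You instead settle for semistability of $f^* E$, prove simplicity via $f_* \Oc = \Oz \oplus \eta^{-1}$ and $h^0 ( Z, \End (E) \otimes \eta^{-1} ) = 0$ (valid since $\deg (\eta) = g - 1 > 0$), deduce that there is at most one saturated line subbundle of slope $7$, and then kill the potential violation by one further genericity condition on $V$ (using $h^0 ( C, N ) \le 4$ by Clifford and $4 + 4 - 7 = 1$ in $\Gr ( 4, H^0 ( Z, E ) )$). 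Your route is more self-contained --- it avoids the external Segre-invariant computation entirely --- at the cost of making $\alpha$-stability contingent on the genericity of $V$, which is harmless here since $V$ is already taken general for Proposition \ref{EllipticLinSt}, and all the imposed conditions are open and nonempty in the same Grassmannian. Both arguments are sound; the paper's is shorter and yields the stronger statement that $f^* E$ itself is stable.
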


\begin{proof}
Let $(f^* E, f^* V)$ be as above. By the proof of \cite[Proposition 5.3]{langenarasimhan}, the vector bundle $f^* E$ has Segre invariant $2$, and in particular is stable. By the discussion following Theorem \ref{moduli}, the system $(f^* E, f^* V)$ is $\alpha$-stable for $\alpha$ close to $0$. 
 By Corollary \ref{biellipticStable}, the dual span bundle $M_{f^* V, f^* E}$ is a stable vector bundle. As stability of the DSB is an open condition, $\mev$ is stable for a general $( E, V )$ in the component of $G ( \alpha ; 2, 14, 4 )$ containing $(f^* E, f^* V)$.
\end{proof}

\begin{remark} \label{rmkBielliptic}
If $E$ is a generated bundle of rank $2$ and degree $14$ over a curve $C$, then in view of the sequence $0 \to \Oc \to E \to \det (E) \to 0$ we see that $\det(E)$ is a line bundle of degree $14$ satisfying $h^0 ( C, \det (E) ) \ge 3$. A standard Brill--Noether argument shows that for $g \ge 18$, a general curve of genus $g$ admits no such line bundle, and hence no such $E$ either. Thus one can formulate Conjecture \ref{StrongButlerConjecture} in situations where Butler's original conjecture does not arise.
\end{remark}

\subsection{A linearly stable coherent system with unstable DSB}

To contrast with the previous subsection, we now exhibit, over any curve, a coherent system $(E, V)$ of type $(2, d, 4)$ which is linearly stable but such that $\mev$ is not semistable. Following \cite[{\S} 8]{mistrettastoppino}, we shall construct a linearly stable coherent system over $\PP^1$ and pull it back via a map $C \to \PP^1$. (Our method for proving linear stability, however, is different from that in \cite{mistrettastoppino}; it is the same as that in Proposition \ref{EllipticLinSt}.)

\begin{theorem} \label{counterex}
Fix an integer $e \ge 3$, and let $E \to \PP^1$ be the bundle $\opo (e) \oplus \opo (e+1)$. Let $V \subset H^0 ( \PP^1 , E )$ be a general subspace of dimension four. Then $(E, V)$ is generated and linearly stable, but $\mev$ is not semistable.
\end{theorem}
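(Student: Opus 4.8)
The plan is to verify the three assertions in turn, the last being the substantial one. Generatedness is immediate: $E = \opo(e)\oplus\opo(e+1)$ is globally generated, a globally generated rank-two bundle on a curve is generated by three (hence four) general sections, and generatedness is open in families, so a general four-dimensional $V$ generates $E$. For the non-semistability of $\mev$, observe that from (\ref{DefnDSB}) the bundle $\mev$ has rank $\dim V - \rank E = 2$ and degree $-\deg E = -(2e+1)$. Since every bundle on $\PP^1$ splits and a semistable rank-two bundle on $\PP^1$ must be $\opo(a)^{\oplus 2}$ of even degree, the odd degree $-(2e+1)$ forces $\mev$ to be non-semistable (indeed unstable).

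For linear stability I would argue by a dimension count in $\Gr(4, H^0(\PP^1, E))$, which has dimension $4((2e+3)-4) = 8e-4$. By Lemma \ref{SlopeCrit} together with Lemma \ref{LStHigherRankEquiv}, the system $(E,V)$ fails to be linearly stable precisely when it has a generated subsystem $(F,W)$ with $\deg F > 0$ and $\frac{\deg F}{\dim W - \rank F} \le \frac{2e+1}{2}$; call the corresponding $V$ \emph{bad}. It suffices to check complete subsystems (Remark \ref{InitialRemarks}(b)), and I would bound the bad locus according to $\rank F \in \{1,2\}$, showing in each case it has dimension strictly less than $8e-4$, so that a general $V$ is linearly stable.

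When $\rank F = 1$ we have $F = E_W \cong \opo(\ell)$ with $\ell \le e+1$ (the maximal degree of a rank-one subsheaf of $E$) and $2 \le \dim W \le \ell+1$, so $\dim W \in \{2,3\}$. If $\dim W = 3$ the ratio is at most $\frac{\ell}{2}\le\frac{e+1}{2}<\frac{2e+1}{2}$, so such $W$ are automatically destabilizing; parametrizing by the saturated sub-line-bundle $\overline{L} \cong \opo(\lambda)$, whose family has dimension $2(e-\lambda+1)$, and imposing $\dim(V\cap H^0(\overline{L}))\ge 3$ bounds this locus by $5e-4$. If $\dim W = 2$, being destabilizing means $\ell \le e$; I would split according to whether the saturation has degree $\lambda\le e$ (a locus of dimension $6e-2$, independently of $\lambda$) or degree $e+1$, the latter forcing the pencil $W$ to have a base point, a codimension-one condition which bounds the locus by $6e-3$. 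All three estimates lie below $8e-4$ once $e\ge 2$.

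The case $\rank F = 2$ is the main obstacle: here the bound is tight and is what forces $e\ge 3$. Now $\dim W = 3$ (it cannot equal $4$, else $F=E$), and destabilizing means $\deg F \le e$. The delicate point is that the generated subsheaf $F = E_W$ need not be saturated in $E$, so I would avoid parametrizing by the $\Quot$ scheme of its saturation, whose $h^0$ is uncontrolled when a summand is negative, and instead parametrize $F$ directly. For each generated $F\cong\opo(b_1)\oplus\opo(b_2)$ with $b_1+b_2=\delta\le e$, the sheaf embeddings $F\hookrightarrow E$ form $\Hom(F,E)$ modulo $\mathrm{Aut}(F)$, of dimension $h^0(F^\vee\otimes E)-\dim\mathrm{Aut}(F)=(4e-2\delta+6)-\dim\mathrm{Aut}(F)$, while the generating three-dimensional subspaces $W\subseteq H^0(F)$ range over $\Gr(3,\delta+2)$ of dimension $3(\delta-1)$. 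Since $\dim\mathrm{Aut}(F)\ge 4$, with equality when $F$ is balanced, the bad $W$ form a family of dimension at most $4e+\delta-1\le 5e-1$; adjoining the choice of $V\supseteq W$, of dimension $2e-1$, yields a bad locus of dimension at most $7e-2$. As $7e-2<8e-4$ exactly when $e>2$, a general $V$ avoids every bad locus when $e\ge 3$, giving the linear stability and explaining the hypothesis.
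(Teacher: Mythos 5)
Your proof is correct and follows essentially the same strategy as the paper's: instability of $\mev$ from its odd degree on $\PP^1$, and linear stability via a dimension count of the loci of bad $V$ in $\Gr \left( 4, H^0 ( \PP^1 , E ) \right)$, arriving at the same critical estimate $7e-2 < 8e-4$ that forces $e \ge 3$ in the rank-two case. The differences are only organizational: you stratify destabilizing subsheaves by rank and parametrize the rank-two ones by splitting type via $\Hom (F,E)$ modulo $\mathrm{Aut}(F)$ for each degree $\delta \le e$, where the paper reduces to generically generated elementary transformations of degree exactly $e$ parametrized by $\Quot^{0, e+1}(E)$; and you exclude only the destabilizing rank-one subsheaves where the paper excludes all rank-one $E_W$ with $\dim W = 2$ — both routes yield the same bounds.
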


\begin{proof}
By Riemann--Roch, we see that $h^0 ( \PP^1 , E ) = 2e + 3$. It is easy to see that a general subspace $V \subset H^0 ( \PP^1 , E )$ of dimension four generates $E$. For any such $V$, the bundle $\mev$ has rank two and odd degree over $\PP^1$. Thus it cannot be semistable. We claim that $( E, V )$ is nonetheless linearly stable. This follows from an argument which is virtually identical to that of Proposition \ref{EllipticLinSt}, but in fact is technically simpler. We must verify (\ref{LStIneq}) for each $W \subset V$ such that $E_W$ is not trivial. We check again the cases where $W$ has dimension $1$, $2$ and $3$ separately. Firstly, if $\dim W = 1$ then $E_W \cong \Oc$, and there is nothing to prove. For the case when $\dim W = 2$, we wish to show that $\rank ( E_W ) = 2$. If this is not the case, then
\[
W \ \subseteq \ j \left( H^0 ( \PP^1 , \opo (t) ) \right) \cap V
\]
where $j \colon \opo (t) \hookrightarrow E$ is a vector bundle injection and $1 \le t \le e + 1$. Such a $j$ defines a saturated element of $\Quot^{1, 2e+1-t} ( E )$. It is easy to check that the obstruction space of the latter scheme at the point $\left[ \opo (t) \xrightarrow{j} E \right]$ is zero, whence
\[
\dim \Quot^{1,2e+1-t} (E) \ = \ \chi \left( \PP^1 , \opo ( 2e + 1 - 2t ) \right) \ = \ 2 ( e - t - 1 ) .
\]
Now for a fixed $t \ge 1$ and a fixed $j \colon \opo(t) \to E$, we obtain a subspace $j \left( H^0 ( \PP^1 , \opo ( t ) ) \right)$ of $H^0 ( \PP^1 , E )$ of dimension $t+1$. We denote this space by 
$\C^{t+1}_j$. Consider now the locus
\[
\Sigma_j \ := \ \left\{ V \in \Gr \left( 4, H^0 ( \PP^1 , E ) \right) : \dim V \cap \C^{t+1}_j \ge 2 \right\} .
\]
As in the proof of Proposition \ref{EllipticLinSt}, one shows that
 $\dim \Sigma_j \le 2 ( 2e + t - 4 )$, and that the dimension of the union
\begin{equation} \label{UnionSigmaj}
\bigcup_{\left[ \opo ( t ) \xrightarrow{j} E \right] \ \in \ \Quot^{1, 2e+1-t} (E)} \Sigma_j
\end{equation}
is at most $\dim ( \Sigma_j ) + \dim \left( \Quot^{1, 2e+1-t} (E) \right) \le 6e - 6$. On the other hand,
$$\dim \Gr \left( 4, H^0 ( \PP^1 , E ) \right) \ = \ 4 ( 2e + 3 - 4 ) \ = \ 8e - 4, $$
which exceeds $6e-6$ since $e \ge 3$.
 Thus for $1 \le t \le e+1$, a general $V \in \Gr \left( 4, H^0 ( \PP^1 , E ) \right)$ does not intersect $\C^{t+1}_j$ in dimension two or more for any $j \colon \opo (t) \to E$. We may therefore assume that $E_W$ is a trivial bundle of rank two, as desired.

\vspace{.25cm}
 
\noindent 
Finally, we consider $W$ of dimension three. Assuming $V$ to be outside (\ref{UnionSigmaj}), we may assume that $\rank ( E_W ) = 2$. 
In order to satisfy (\ref{LStIneq}), we wish to show that when $V$ is general, for any $W \subset V$ of dimension three we have
\[
\frac{\deg (E_W)}{3 - 2} \ > \ \frac{2e + 1}{ 4 - 2 } ;
\]
that is, $\deg (E_W) \ge e + 1$. In other words, we must show that a general $V$ does not contain a three-dimensional subspace of $H^0 ( \PP^1 , F )$ for any elementary transformation $F \subset E$ of degree at most $e$ generated by $V \cap H^0 (C, F )$.

\vspace{.25cm}
 
\noindent Now we observe that if $F' \subset E$ is an elementary transformation of degree $e' < e$ which is generated by $V \cap H^0 ( C, F' )$, then $F'$ is contained in at least one elementary transformation $F \subset E$ of degree $e$; and in this case, $F$ is generically generated by $V \cap H^0 ( C, F' )$. Thus it suffices to prove that a general $V$ does not contain a three-dimensional subspace of $H^0 ( \PP^1 , F )$ for any \emph{generically} generated elementary transformation $F \subset E$ of degree \emph{exactly} $e$.

\vspace{.25cm}
 
\noindent Such an $F$ must be of the form $\opo ( a_1 ) \oplus \opo ( a_2 )$ where $a_1 + a_2 = e$. As moreover $F$ is generically generated, $0 \le a_1 \le a_2 \le e$, and so $h^0 ( \PP^1 , F ) = ( a_1 + 1 ) + ( a_2 + 1 ) = e + 2$.

\vspace{.25cm}
 
\noindent Now any degree $e$ elementary transformation $F \subset E$ defines an element of $\Quot^{0, e + 1} ( E )$, which is (possibly reducible) of dimension $2 \cdot ( e + 1 )$. For a fixed such $F \subset E$, consider the locus
\[
\Sigma_F \ := \ \left\{ V \in \Gr \left( 4, H^0 ( \PP^1 , E ) \right) : W \subset V \hbox{ for some } W \in \Gr \left( 3, H^0 ( \PP^1 , F ) \right) \right\} .
\]
Any $V$ belonging to $\Sigma_F$ fits into a sequence of the form
\[
0 \ \to \ W \ \to \ V \ \to \ V' \to \ 0
\]
for some $W \in \Gr \left( 3, H^0 ( \PP^1 , F ) \right)$ and
\[
V' \ \in \ \Gr \left( 1, H^0 ( \PP^1 , E ) / W \right) \ = \ \PP \left( H^0 ( \PP^1 , E ) / W \right) \ = \ \PP^{2e-1} .
\]
On shows that the dimension of the union
\[
\bigcup_{\left[ F \to E \right] \ \in \ \Quot^{0, e+1} (E)} \Sigma_F
\]
is bounded above by
\[
\dim \Sigma_F + \dim \Quot^{0, e+1} ( E ) \ \le \ (5e - 4) + 2 (e+1) \ = \ 
 7e - 2< \dim \Gr \left( 4, H^0 ( \PP^1 , E ) \right) = 8e - 4,
\]
 for $e\geq 3$. Thus a general $V \in \Gr \left( 4, H^0 ( \PP^1 , E ) \right)$ does not intersect $H^0 ( \PP^1 , F )$ in dimension three for any subsheaf $F \subset E$ of rank two and degree $\le e$. Putting these facts together, we conclude that $( E, V )$ is linearly stable, as desired.

\vspace{.25cm}
 
\noindent Now let $C$ be any curve of genus $g \ge 0$. Let $f \colon C \to \PP^1$ be a nonconstant map. By Lemma \ref{PullbackLinSt}, the coherent system $( f^* E, f^* V )$ is linearly stable, while $M_{f^* V, f^* E} \cong f^* \mev$ is not semistable.
\end{proof}

\subsection{Remarks and questions}

\begin{remark} \label{OtherDefn}
In Definition \ref{LStHigherRank}, linear semistability is defined as a condition on linear systems $(E, V)$ over $C$. The geometric meaning of this condition is perhaps less apparent than the condition in Mumford \cite[Definition 2.16]{mumfordstability}. Noting that $V$ is naturally isomorphic to a subspace of $H^0 ( \PP E^\vee , \opeo)$ one can show that Definition \ref{LStHigherRank} is in fact equivalent to linear semistability, in Mumford's sense, of the image of the natural map $\PP E^\vee \to \PP V^\vee$. In a forthcoming paper, we propose to explore more fully the geometric implications of linear semistability.
\end{remark}

\begin{question}
Find more examples analogous to \cite[Proposition 8.4]{mistrettastoppino}, \cite[Theorem 4.1]{CMT} and Theorem \ref{counterex} showing that linear (semi)stability of $(E, V)$ is in general weaker than slope stability of $\mev$. In particular, are there examples where $E$ is (semi)stable and/or $(E, V)$ is complete?
\end{question}

\begin{question}
Mistretta and Stoppino \cite[Proposition 3.3]{mistrettastoppino} use Clifford indices to prove linear stability of a large class of rank one coherent systems. This approach seems unlikely to transfer directly to higher rank, because Lange and Newstead's higher rank Clifford indices \cite{langenewstead} are defined for semistable bundles, and so are not always applicable in the corresponding way. Therefore, it is natural to ask for other methods, perhaps generalising \cite[Proposition 8.2]{mistrettastoppino}, for deciding when a coherent system $(E, V)$ is linearly (semi)stable.
\end{question}

\begin{question}
As mentioned before, Mumford proved \cite[Theorem 4.12]{mumfordstability} that a linearly (semi)stable curve in $\PP^n$ is Chow (semi)stable. Is this also true for projective bundles? If so, can the results of \cite{bptl} be generalised to Hilbert schemes parametrising such projective bundles?
\end{question}

\begin{question}
What is the relation, if any, between $\alpha$-semistability and linear semistability of a generated coherent system $(E, V)$?
\end{question}

\begin{question}
Two other properties investigated in \cite{mistrettastoppino} which are stronger than both linear stability of $(E, V)$ and slope stability of $\mev$, are cohomological stability of $\mev$ (see \cite{ein}) and the existence of a theta divisor for $\mev$ (see for example \cite{Popa}, \cite{montserrat} and \cite{CTtheta}). It would be interesting to determine other conditions, both for rank one and for higher rank, under which linear stability implied these properties also.
\end{question}


\begin{thebibliography}{999}

\bibitem{ACGH} {E.\ Arbarello; M.\ Cornalba; P.\ A.\ Griffiths; J.\ Harris}, {\it Geometry of algebraic curves}, volume I. Grundlehren der mathematischen Wissenschaften \textbf{267}. New York: Springer-Verlag, 1985.

 \vspace{.1cm}

\bibitem{BS} {M.\ \'A.\ Barja and L.\ Stoppino}, {\it Linear stability of projected canonical curves with applications to the slope of fibred surfaces},
 J.\ Math.\ Soc.\ Japan {\bf 60}, no.\ 1 (2008), 171--192.

 \vspace{.1cm}

\bibitem{bbn1} {U.\ N.\ Bhosle, L.\ Brambila-Paz and P.\ E.\ Newstead}, {\it On coherent systems of type $(n, d, n + 1)$ on Petri curves},
 Manuscripta Math.\ {\bf 126} (2008), 409--441.

 \vspace{.1cm}

\bibitem{bbn2} {U.\ N.\ Bhosle, L.\ Brambila-Paz and P.\ E.\ Newstead}, {\it On linear series and a conjecture of D. C. Butler},
 Internat.\ J.\ Math.\ {\bf 26}, no.\ 2 (2015), Article ID 1550007, 18 pp.

 \vspace{.1cm}

\bibitem{bgpmn} {S.\ B.\ Bradlow, O.\ Garc\'{\i}a-Prada, V.\ Mu\~noz and P.\ E.\ Newstead}, {\it Coherent systems and Brill--Noether theory},
 Internat.\ J.\ Math.\ {\bf 14}, no.\ 7 (2003), 683--733.


\bibitem{bra} {L.\ Brambila-Paz}, {\it Non-emptiness of moduli space of coherent systems},
 Internat.\ J.\ of Math.\ {\bf 19}, no.\ 7 (2008), 777--799.

 \vspace{.1cm}

\bibitem {bmno} {L.\ Brambila-Paz, O.\ Mata-Gutierrez, P.\ E.\ Newstead and A.\ Ortega}, {\it Generated coherent systems and a conjecture of D.\ C.\ Butler},
 Internat.\ J.\ Math.\ {\bf 30}, no.\ 5 (2019), article ID 1950024, 25 pp. 

 \vspace{.1cm}

\bibitem{bptl} {L.\ Brambila-Paz and H.\ Torres-L\'opez}, {\it On Chow stability for algebraic curves}, 
 Manuscr.\ Math.\ {\bf 151}, no.\ 3--4 (2016), 289--304.

 \vspace{.1cm}

\bibitem{butler} {D.\ C.\ Butler}, {\it Normal generation of vector bundles over a curve},
 J.\ Differential Geom.\ {\bf 39}, no.\ 1 (1994), 1--34.

 \vspace{.1cm}

\bibitem{but} {D.\ C.\ Butler}, {\it Birational maps of moduli of Brill--Noether pairs}, 
 preprint, arXiv:alg-geom/9705009.

 \vspace{.1cm}

%
%
\bibitem{CT} {A.\ Castorena and H.\ Torres-L\'opez}, {\it Linear stability and stability of syzygy bundles}, 
 Internat.\ J.\ of Math.\ {\bf 29}, no.\ 11 (2018), 1850080

	\vspace{.1cm}

\bibitem{CTtheta} {A.\ Castorena and H.\ Torres-L\'opez}, {\it New examples of theta divisors for some syzygy bundles},
 Eur.\ J.\ Math.\ {\bf 7}, no.\ 1 (2021), 340--352.

	\vspace{.1cm}
	
\bibitem{CMT} {A.\ Castorena, E.\ C.\ Mistretta, and H.\ Torres-L\'opez}, {\it On linear stability and syzygy stability}, 
 Abhandlungen aus dem Mathematischen Seminar der Universität Hamburg {\bf 92} (2022), 91--103.

\vspace{.1cm}

\bibitem{ein} {L.\ Ein and R.\ Lazarsfeld}, {\it Stability and restrictions of Picard bundles, with an application to the normal bundles of elliptic curves}, 
 in: G.\ Ellingsrud, C.\ Peskine, G.\ Sacchiero, S.\ A.\ Strømme (eds.), Complex Projective Geometry (Trieste 1989/Bergen 1989). LMS Lecture Note Series, vol.\ 179, 149--156. CUP, Cambridge, 1992.

 \vspace{.1cm} 

\bibitem{resolucionminimal} {G.\ Farkas, M.\ Mustata and M.\ Popa}, {\it Divisors on $\mathcal{M}_{g,g+1}$ and the minimal resolution conjecture for points on canonical curves}, 
 Annales Sci.\ de \'Ecole Norm.\ Sup.\ (4) {\bf 36} (2003), 553--581.

 \vspace{.1cm} 

\bibitem{farkaslarson} {G.\ Farkas and E.\ Larson}, {\it The minimal resolution conjecture for points on general curves}, 
 preprint, arXiv:2209.11308, 2022.

 \vspace{.1cm}

\bibitem{montserrat} {T.\ Gwena and M.\ Teixidor i Bigas}, {\it Maps between moduli spaces of vector bundles and the base locus of the theta divisor}, 
 Proc.\ Amer.\ Math.\ Soc.\ {\bf 137}, no.\ 3 (2009), 853--861.

 \vspace{.1cm}

\bibitem{kingnewstead} {A.\ D.\ King and P.\ E.\ Newstead}, {\it Moduli of Brill--Noether pairs on algebraic curves},
 Internat.\ J.\ Math.\ {\bf 6}, no.\ 5 (1995), 733--748.

 \vspace{.1cm}

\bibitem{langenarasimhan} {H.\ Lange and M.\ S.\ Narasimhan}, {\it Maximal subbundles of rank two vector bundles on curves},
 Math.\ Ann.\ \textbf{266} (1983), 55--72.

 \vspace{.1cm}

\bibitem{langenewstead} {H.\ Lange and P.\ E.\ Newstead}, {\it Clifford indices for vector bundles on curves}, 
 in A.\ Schmitt (ed.), ``Affine flag manifolds and principal bundles'', Trends in Mathematics, 165--202. Basel, Birkh\"auser, 2010.

 \vspace{.1cm}

\bibitem{mis} {E.\ C.\ Mistretta}, {\it Stability of line bundle transforms on curves with respect to low codimensional subspaces}, 
 J.\ Lond.\ Math.\ Soc.\ (2) {\bf 78}, no.\ 1 (2008), 172--182.

 \vspace{.1cm}

\bibitem{mistrettastoppino} {E.\ C.\ Mistretta and L.\ Stoppino}, {\it Linear series on curves: stability and Clifford index}, 
 Internat.\ J.\ Math.\ {\bf 23}, no.\ 12, paper ID 1250121 (2012), 25 pp.

 \vspace{.1cm}
  
\bibitem{mumfordstability} {D.\ Mumford}, {\it Stability of projective varieties}, 
 Lectures given at the Institut des Hautes \'Etudes Scientifiques, Bures-sur-Yvette, March-April 1976. Monographie de l'Enseignement Math\'ematique, no.\ 24. L'Enseignement Math\'ematique, Geneva, 1977. 
  
 \vspace{.1cm}

\bibitem{newsteadcohsys} {P.\ E.\ Newstead}, {\it Higher rank Brill--Noether theory and coherent systems open questions}, 
 Proyecciones {\bf 41}, no.\ 2 (2022), 449--480.

 \vspace{.1cm}

\bibitem{ram} {K.\ Paranjape and S.\ Ramanan}, {\it On the canonical ring of a curve}, 
 in Algebraic geometry and Commutative Algebra, in honour of Masayoshi Nagata, vol.\ 2,  Kinokuniya (1987), 503--516.

 \vspace{.1cm}

\bibitem{Popa} {M.\ Popa}, {\it On the base locus of the generalized theta divisor}, 
 C.\ R.\ Acad.\ Sci., Paris, S\'er.\ I, Math.\ {\bf 329}, no.\ 6 (1999), 507--512.

 \vspace{.1cm}

\bibitem{schneider} {O.\ Schneider}, {\it Stabilit\'e des fibr\'es $\wedge^p E_L$ et condition de Raynaud}, 
 Fac.\ Sci.\ Toulouse Math.\ (6) 14(3) (2005), 515--525.  

 \vspace{.1cm}

\bibitem{Sto} {L.\ Stoppino} {\it Slope inequalities for fibred surfaces via GIT},
 Osaka J.\ Math.\ {\bf 45}, no.\ 4 (2008), 1027--1041.

\end{thebibliography}
\end{document}